\theoremstyle{plain}
\newtheorem{theorem}{Theorem}[section]
\newtheorem{lemma}[theorem]{Lemma}
\newtheorem*{observation*}{Observation}
\newtheorem*{claim*}{Claim}
\newtheorem*{subclaim*}{Subclaim}
\theoremstyle{definition}
\newtheorem{definition}[theorem]{Definition}
\newcommand{\crit}[1]{{{\rm{crit}}\left({#1}\right)}}
\newcommand{\ran}[1]{{{\rm{ran}}(#1)}}
\newcommand{\POT}[1]{{\mathcal{P}}({#1})}
\newcommand{\map}[3]{{#1}:{#2}\longrightarrow{#3}}
\newcommand{\Set}[2]{\{{#1}~\vert~{#2}\}}
\newcommand{\seq}[2]{\langle{#1}~\vert~{#2}\rangle}
\newcommand{\Poti}[2]{{\mathcal{P}}_{#2}(#1)}
\newcommand{\HH}[1]{{\rm{H}}(#1)}
\newcommand{\Add}[2]{{\rm{Add}}({#1},{#2})}
\newcommand{\id}{{\rm{id}}}
\newcommand{\On}{{\rm{Ord}}}
\newcommand{\LL}{{\rm{L}}}
\newcommand{\HOD}{{\rm{HOD}}}
\newcommand{\ZF}{{\rm{ZF}}}
\newcommand{\ZFC}{{\rm{ZFC}}}
\newcommand{\PPP}{{\mathbb{P}}}
\newcommand{\QQQ}{{\mathbb{Q}}}
\newcommand{\VV}{{\rm{V}}}
\newcommand{\scrL}{{\mathscr{L}}}
\newcommand{\calA}{\mathcal{A}}
\newcommand{\calF}{\mathcal{F}}
\newcommand{\calL}{\mathcal{L}}
\newcommand{\calU}{\mathcal{U}}
 \newcommand{\calS}{\mathcal{S}}
\title{Strong unfoldability, shrewdness and combinatorial consequences}
\author{Philipp L\"ucke}
\address{Institut de Matem\`{a}tica, Universitat de Barcelona.  Gran via de les Corts Catalanes 585, 08007 Barcelona, Spain.}
\email{philipp.luecke@ub.edu}
\subjclass[2020]{03E55, 03E05, 03E45}
\keywords{Strongly unfoldable cardinals, shrewd cardinals, elementary embeddings, diamond principles, Laver functions, strong chain conditions}
\thanks{This project has received funding from the European Union’s Horizon 2020 research and innovation programme under the Marie Sk{\l}odowska-Curie grant agreement No 842082 (Project \emph{SAIFIA: Strong Axioms of Infinity -- Frameworks, Interactions and Applications}). 
 The author would like to thank Gunter Fuchs and Victoria Gitman for a discussion that motivated a significant part of the work presented in this note. 
 In addition, the author would like to thank the anonymous referee for numerous suggestions and corrections.}
\begin{document}

\begin{abstract}
 We show that the notions of \emph{strongly unfoldable cardinals}, introduced by Villaveces in his model-theoretic studies of models of set theory, and \emph{shrewd cardinals}, introduced by Rathjen in a proof-theoretic context,  coincide. 
 We then proceed by using ideas from the proof of this equivalence to establish the existence of \emph{ordinal anticipating Laver functions} for strong unfoldability. 
  With the help of  these functions, we  show that the principle $\Diamond_\kappa(\mathrm{Reg})$ holds at every strongly unfoldable cardinal $\kappa$ with the property that there exists a subset $z$ of $\kappa$ such that every subset of $\kappa$ is ordinal definable from $z$.  
 While a result of D\v{z}amonja and Hamkins  shows that $\Diamond_\kappa(\mathrm{Reg})$ can consistently fail at a strongly unfoldable cardinal $\kappa$, 
 this implication can be used to prove that various canonical extensions of the axioms of $\ZFC$ are either compatible with the assumption that $\Diamond_\kappa(\mathrm{Reg})$ holds at every strongly unfoldable cardinal $\kappa$ or outright imply this statement.  
 Finally, we will also use our methods to contribute to the study of strong chain conditions of partials orders and their productivity. 
\end{abstract}

\maketitle


\section{Introduction}

This note contributes to the study of \emph{strongly unfoldability}, a large cardinal notion introduced by Villaveces in his investigation of chains of end elementary extensions of models of set theory in \cite{MR1649079}.

\begin{definition}[Villaveces]
 An inaccessible cardinal $\kappa$ is \emph{strongly unfoldable} if for every ordinal $\lambda$ and every transitive $\ZF^-$-model\footnote{A short argument shows that the restriction of this property to models of $\ZFC^-$ is equivalent to the original property. The proof of Theorem \ref{theorem:Equivalent} below directly shows that strong unfoldability is already equivalent to the restriction of the defining property to elementary submodels of $\HH{\kappa^+}$.} $M$ of cardinality $\kappa$ with $\kappa\in M$ and ${}^{{<}\kappa}M\subseteq M$, there is a transitive set $N$ with $\VV_\lambda\subseteq N$ and an elementary embedding $\map{j}{M}{N}$ with $\crit{j}=\kappa$ and $j(\kappa)\geq\lambda$. 
\end{definition}

This large cardinal notion possesses two features that can be combined in a profound and fruitful way to make the study of these cardinals very appealing. 
First, strongly unfoldable cardinals are located relatively low in the hierarchy of large cardinals. 
 More specifically, results in {\cite[Section 2]{MR2279655}}  show that their consistency strength is strictly between total indescribability and subtleness.  
 %
 Moreover, {\cite[Theorem 2.1]{MR1649079}} shows that strongly unfoldable cardinals can exist in G\"odel's constructible universe $\LL$. 
 Second, strongly unfoldable cardinals can be seen as \emph{miniature} versions of supercompact cardinals and, for several important results utilizing supercompactness, analogous statements can be shown to hold for strong unfoldability. 
 For example, results of Hamkins and Johnstone in \cite{MR2470843} show that the consistency of the restriction of the \emph{Proper Forcing Axiom} to proper forcing notions that preserve either $\aleph_2$ or $\aleph_3$ can be established by forcing over a model containing a strongly unfoldable cardinal and therefore this forcing  axiom can hold in forcing extensions of $\LL$.

 In this note, we continue this line of research and provide further examples in which constructions based on strong large cardinal assumptions can be adapted to strongly unfoldable cardinals. 
 The starting point of our work is the  following large cardinal property, introduced by  Rathjen in \cite{MR1369172} in his research on ordinal analysis in proof theory.

 \begin{definition}[Rathjen]\label{definition:ShrewdCardinals}
   A cardinal $\kappa$ is \emph{shrewd} if for every formula $\Phi(v_0,v_1)$ in the language $\calL_\in$ of set theory, every ordinal $\gamma>\kappa$ and every  $A\subseteq\VV_\kappa$ with the property that $\Phi(A,\kappa)$ holds in $\VV_\gamma$, there exist ordinals $\alpha<\beta<\kappa$ such that $\Phi(A\cap\VV_\alpha,\alpha)$ holds in $\VV_\beta$. 
\end{definition}

  A classical result of Magidor in \cite{MR0295904} shows that a cardinal $\kappa$ is supercompact if and only if for every cardinal $\theta>\kappa$, there exists a cardinal $\bar{\theta}<\kappa$ and a non-trivial elementary embedding $\map{j}{\HH{\bar{\theta}}}{\HH{\theta}}$ with $j(\crit{j})=\kappa$.
  The results of {\cite[Section 2]{SRminus}} show that an analogous characterization of shrewd cardinals can be obtained by replacing the domains of the relevant embeddings with elementary submodels of small cardinality. 
  In Section \ref{section:equiv}, we will use this characterization to show that the two large cardinal notions discussed above are equivalent.

\begin{theorem}\label{theorem:Equivalent}
 A cardinal is strongly unfoldable if and only if it is shrewd. 
\end{theorem}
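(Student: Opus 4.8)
The plan is to route both implications through the Magidor-style characterization of shrewdness recorded just before the statement. Write $(\star)$ for the property, established in {\cite[Section 2]{SRminus}}, that $\kappa$ is shrewd if and only if for every sufficiently large regular $\theta$ and every $a\in\HH\theta$ there is an elementary embedding $\map{j}{\bar X}{\HH\theta}$ whose domain $\bar X$ is the transitive collapse of an elementary submodel of $\HH\theta$ of cardinality less than $\kappa$, with $a\in\ran{j}$ and $j(\crit{j})=\kappa$. I also recall at the outset that shrewd cardinals are totally indescribable, hence inaccessible, so that in either direction $\kappa$ may be assumed inaccessible and $\kappa^{{<}\kappa}=\kappa$.

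For the direction from strong unfoldability to shrewdness I argue directly. Fix a shrewdness instance: a formula $\Phi$, an ordinal $\gamma>\kappa$, and $A\subseteq\VV_\kappa$ with $\Phi(A,\kappa)$ holding in $\VV_\gamma$. Using $\kappa^{{<}\kappa}=\kappa$, I build a transitive $\ZF^-$-model $M$ of cardinality $\kappa$ with $\VV_\kappa\subseteq M$, $\{A,\kappa\}\subseteq M$ and ${}^{{<}\kappa}M\subseteq M$, realized as the transitive collapse of a suitable $Y\prec\HH{\kappa^+}$ with $\VV_\kappa\cup\{A\}\subseteq Y$; since $\VV_\kappa$ is transitive the collapse fixes it pointwise and hence fixes $A$ and $\kappa$. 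Applying strong unfoldability with $\lambda=\gamma+2$ yields a transitive $N\supseteq\VV_\lambda$ and an elementary $\map{j}{M}{N}$ with $\crit{j}=\kappa$ and $j(\kappa)\geq\lambda>\gamma$. As $\crit{j}=\kappa$ and $\VV_\kappa\subseteq M$, we have $j\restriction\VV_\kappa=\id$ and $j(A)\cap\VV_\kappa=A$. Because $\VV_\gamma\subseteq\VV_\lambda\subseteq N$, the transitive set $N$ computes satisfaction in $\VV_\gamma$ correctly and therefore sees that $\Phi(j(A)\cap\VV_\kappa,\kappa)$ holds in $\VV_\gamma$; reading $\kappa<\gamma<j(\kappa)$ as witnesses, $N$ satisfies ``there are $\alpha<\beta<j(\kappa)$ such that $\Phi(j(A)\cap\VV_\alpha,\alpha)$ holds in $\VV_\beta$.'' Pulling this back along $j$, whose relevant parameters are $j(A)$ and $j(\kappa)$, shows that $M$ satisfies ``there are $\alpha<\beta<\kappa$ such that $\Phi(A\cap\VV_\alpha,\alpha)$ holds in $\VV_\beta$,'' and since $\VV_\kappa\subseteq M$ this statement is absolute between $M$ and $\VV$ for $\beta<\kappa$. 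This is exactly the reflection demanded by shrewdness.

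For the converse, suppose $\kappa$ is shrewd, fix a transitive $\ZF^-$-model $M$ of cardinality $\kappa$ with $\VV_\kappa\subseteq M$, $\kappa\in M$ and ${}^{{<}\kappa}M\subseteq M$, together with an ordinal $\lambda$; I must produce a transitive $N\supseteq\VV_\lambda$ and an elementary $\map{j^*}{M}{N}$ with $\crit{j^*}=\kappa$ and $j^*(\kappa)\geq\lambda$. Choosing $\theta>\lambda$ large enough that $M,\lambda,\VV_\lambda\in\HH\theta$ and applying $(\star)$ with $a=\{M,\lambda\}$ produces a small embedding $\map{j}{\bar X}{\HH\theta}$ with $j(\bar\kappa)=\kappa$, where $\bar\kappa=\crit{j}$, and with $M,\lambda\in\ran{j}$. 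The remaining, and decisive, task is to \emph{invert} this data: the small embedding naturally has critical point $\bar\kappa$ below $\kappa$ and a domain of size less than $\kappa$, whereas strong unfoldability asks for an embedding with critical point exactly $\kappa$ and domain the given model $M$. I would extract the required embedding by assembling the target $N\supseteq\VV_\lambda$ and the map $j^*$ out of the small embeddings supplied by $(\star)$ — an ultrapower/direct-limit construction driven by these embeddings and controlled by the closure ${}^{{<}\kappa}M\subseteq M$ and the correctness of the collapse $\bar X$ — in direct analogy with the way Magidor's small embeddings are inverted to recover a genuine supercompactness measure.

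I expect this inversion to be the main obstacle. The difficulty is structural rather than bookkeeping: no elementary-submodel hull of a large structure can simultaneously contain $M$ as a subset and have critical point $\kappa$, so the embedding $j^*$ cannot be read off from a hull and genuine largeness must be mined from $(\star)$ to build $N$ and $j^*$ from scratch. The delicate point is guaranteeing the full strength condition $\VV_\lambda\subseteq N$ — not merely $j^*(\kappa)\geq\lambda$ — while keeping the resulting model well-founded, and this is where I would concentrate the technical effort.
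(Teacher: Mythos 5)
Your left-to-right direction (strongly unfoldable implies shrewd) is correct and is essentially the paper's own argument: reflect the statement inside $N$ using $\VV_{\gamma+1}\subseteq N$, pull it back along $j$, and use $\VV_\kappa\subseteq M$ for absoluteness. The converse, however, contains a genuine gap, and you flag it yourself: everything after you invoke $(\star)$ is a declaration of intent rather than a proof, and the route you sketch --- an ultrapower/direct-limit ``inversion'' of the small embeddings --- is misdirected. There is a structural obstruction to it: every embedding supplied by $(\star)$ has critical point $\bar{\kappa}<\kappa$ and domain of size less than $\kappa$, so you never hold an embedding with critical point $\kappa$ from which an extender or measure could be derived, and no limit construction is going to conjure one.

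The missing idea is that \emph{no inversion is needed}: the small embedding, restricted to a reflected copy of $M$, already \emph{is} the witnessing embedding, with target $M$ itself. Concretely, take $\theta>\lambda$ a limit of fixed points of the $\beth$-function and apply the embedding characterization in the precise form of the paper's Lemma \ref{lemma:ShrewdEmbeddings}: there are $\bar{\kappa}<\bar{\theta}<\kappa$, an elementary submodel $X$ of $\HH{\bar{\theta}}$ with $(\bar{\kappa}+1)\cup{}^{{<}\bar{\kappa}}X\subseteq X$, and an elementary $\map{j}{X}{\HH{\theta}}$ with $j\restriction\bar{\kappa}=\id_{\bar{\kappa}}$, $j(\bar{\kappa})=\kappa$ and $M,\lambda\in\ran{j}$. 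Pick $\bar{M},\bar{\lambda}\in X$ with $j(\bar{M})=M$, $j(\bar{\lambda})=\lambda$. Since $\kappa$ is inaccessible and ${}^{{<}\kappa}M\subseteq M$, one gets $\VV_\kappa\subseteq M$, hence $\VV_{\bar{\lambda}}\subseteq M$ because $\bar{\lambda}<\bar{\theta}<\kappa$; moreover $\bar{M}\subseteq X$ and $\map{j\restriction\bar{M}}{\bar{M}}{M}$ is elementary with critical point $\bar{\kappa}$ and $(j\restriction\bar{M})(\bar{\kappa})=\kappa>\bar{\lambda}$. So the pair $(M,\,j\restriction\bar{M})$ witnesses, in $\VV$, the $\Sigma_1$ statement (with parameters $\bar{\kappa}$, $\bar{M}$, $\VV_{\bar{\lambda}}$, all in $X$): ``there is a transitive $N$ with $\VV_{\bar{\lambda}}\subseteq N$ and an elementary $\map{i}{\bar{M}}{N}$ with $\crit{i}=\bar{\kappa}$ and $i(\bar{\kappa})>\bar{\lambda}$.'' By the $\Sigma_1$-Reflection Principle this statement holds in $\HH{\bar{\theta}}$, hence in $X$ by elementarity; applying $j$ converts it into the corresponding $\Sigma_1$ statement about $M$, $\kappa$, $\VV_\lambda$ in $\HH{\theta}$, and upward $\Sigma_1$-absoluteness gives it in $\VV$ --- which is exactly strong unfoldability of $\kappa$ for this $M$ and $\lambda$. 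In particular, your worries about securing $\VV_\lambda\subseteq N$ and well-foundedness dissolve: $N$ is produced by elementarity, not assembled. Two further remarks. First, this transfer argument is the reason the exact form of the characterization matters: the domain must be an elementary submodel of some $\HH{\bar{\theta}}$ with $\bar{\theta}<\kappa$ (as in Lemma \ref{lemma:ShrewdEmbeddings}), not a transitive collapse of a submodel of $\HH{\theta}$ as in your $(\star)$, since it is the chain $\VV\succ_{\Sigma_1}\HH{\bar{\theta}}\succ X$ that lets the $\Sigma_1$ statement descend to the domain of $j$. Second, Magidor's supercompactness proof, which you cite as the model for ``inversion,'' in fact follows this same pattern --- witness the reflected statement on the small side and push it up by elementarity --- so it supports the paper's argument rather than the ultrapower construction you propose.
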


Recall that, given an uncountable regular cardinal $\kappa$ and a cardinal $\lambda\geq\kappa$, Miyamoto defined the cardinal $\kappa$ to be \emph{$H_\lambda$-reflecting} (see \cite{MR1789737} and \cite{MiyFast}), 
if for every $\calL_\in$-formula $\varphi(v)$ and all $z\in\HH{\lambda}$ with the property that there exists a cardinal $\theta$ with $z\in\HH{\theta}$ and $\HH{\theta}\models\varphi(z)$, the collection $\Poti{\HH{\lambda}}{\kappa}$ contains a stationary subset consisting of elementary submodels $X$ of $\HH{\lambda}$ 
with $z\in X$ and 
the property that when $\map{\pi}{X}{M}$ denotes the  transitive collapse of $X$, then there exists a cardinal $\bar{\theta}<\kappa$ with $\pi(z)\in\HH{\bar{\theta}}$ and $\HH{\bar{\theta}}\models\varphi(\pi(z))$.  
It is easy to see that every $H_{\kappa^+}$-reflecting cardinal is shrewd and results of Miyamoto in {\cite[Section 1]{MR1789737}} show that strong unfoldability is equivalent to $H_{\kappa^+}$-reflection. 
The equivalence provided by Theorem \ref{theorem:Equivalent} now shows that it is possible to substantially weaken the stationarity assumption in Miyamoto's definition and still obtain the same property.  
%

In the case of supercompact cardinals $\kappa$, several important properties of these cardinals can be more easily derived using the embeddings $\map{j}{\HH{\bar{\theta}}}{\HH{\theta}}$ provided by Magidor's results in \cite{MR0295904} 
 %
 and the absoluteness of various set-theoretic statements between the set-theoretic universe $\VV$ and the sets $\HH{\kappa}$, $\HH{\theta}$ and $\HH{\bar{\theta}}$.
 %
 It now turns out that the correctness properties of the domain models of the embeddings used in the proof of Theorem \ref{theorem:Equivalent}  still suffice to  carry out some of these arguments and thereby derive interesting consequences of strong unfoldability. 
  In the last two sections of this paper, we will present two examples of this approach. 

 In Section \ref{section:laver}, 
 we study the validity of strong guessing principles at strongly unfoldable cardinals. 
 A short argument (see {\cite[Observation 1]{MR2279655}}) shows that the principle $\Diamond_\kappa(\mathrm{Reg})$ holds at every measurable cardinal $\kappa$, i.e. for every measurable cardinal $\kappa$, there exists a sequence $\seq{A_\alpha}{\alpha<\kappa}$ with the property that for every subset $A$ of $\kappa$, the set of regular cardinals $\alpha<\kappa$ with $A\cap\alpha=A_\alpha$ is stationary in $\kappa$. Results of Jensen and Kunen in \cite{JensenKunen1969:Ineffable} show that the same conclusion holds for subtle cardinals. 
 In contrast,  Hauser \cite{MR1164732} generalized earlier work of Woodin by showing that for all natural numbers $m$ and $n$, the principle $\Diamond_\kappa(\mathrm{Reg})$ can consistently fail at a $\Pi^m_n$-indescribable cardinal $\kappa$. 
 D\v{z}amonja and Hamkins improved this result in \cite{MR2279655} by showing that $\Diamond_\kappa(\mathrm{Reg})$ can consistently fail at a strongly unfoldable cardinal $\kappa$.  
  In Section \ref{section:laver}, we will show that, in many canonical models of set theory, the principle $\Diamond_\kappa(\mathrm{Reg})$ holds at all strongly unfoldable cardinals $\kappa$ by proving following result. 

\begin{theorem}\label{theorem:HODdiamonds}
  If $\kappa$ is a strongly unfoldable cardinal with $\POT{\kappa}\subseteq\HOD_z$  for some $z\subseteq\kappa$, then $\Diamond_\kappa(\mathrm{Reg})$ holds. 
\end{theorem}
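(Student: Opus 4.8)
The plan is to combine the ordinal anticipating Laver function for strong unfoldability constructed earlier in this section with the coding of subsets of $\kappa$ by ordinals that is provided by the hypothesis $\POT{\kappa}\subseteq\HOD_z$. Fix such a Laver function $\map{\ell}{\kappa}{\kappa}$, whose defining property guarantees that for all ordinals $\eta$ and $\lambda$ there exist a transitive $\ZF^-$-model $M$ of cardinality $\kappa$ with $\{\ell,z,\kappa\}\subseteq M$ and ${}^{{<}\kappa}M\subseteq M$, together with an elementary embedding $\map{j}{M}{N}$ into a transitive set $N$ with $\VV_\lambda\subseteq N$, $\crit{j}=\kappa$, $j(\kappa)\geq\lambda$ and $\veq{j(\ell)}{\kappa}=\eta$; I would further require that a given subset of $\kappa$ also belongs to $M$. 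Since $\POT{\kappa}\subseteq\HOD_z$, I may fix a single $\calL_\in$-formula $\varphi(v_0,v_1,v_2)$ with the property that every subset $A$ of $\kappa$ can be written in the form $A=\Set{\xi<\kappa}{\VV_\gamma\models\varphi(\xi,\delta,z)}$ for suitable ordinals $\gamma$ and $\delta$. Fixing a pairing $\goedel{\cdot}{\cdot}$ of ordinals that maps $\kappa\times\kappa$ onto $\kappa$, I would then define the candidate sequence $\seq{A_\alpha}{\alpha<\kappa}$ by decoding $\ell(\alpha)$ with the restricted parameter $z\cap\alpha$: writing $\ell(\alpha)=\goedel{\gamma_\alpha}{\delta_\alpha}$, set $A_\alpha=\Set{\xi<\alpha}{\VV_{\gamma_\alpha}\models\varphi(\xi,\delta_\alpha,z\cap\alpha)}$.

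To verify that this is a $\Diamond_\kappa(\mathrm{Reg})$-sequence, fix a subset $A$ of $\kappa$ and a club $C$ in $\kappa$; it suffices to produce a regular cardinal $\alpha\in C$ with $A\cap\alpha=A_\alpha$. Using the coding formula, pick ordinals $\gamma,\delta$ with $A=\Set{\xi<\kappa}{\VV_\gamma\models\varphi(\xi,\delta,z)}$, set $\eta=\goedel{\gamma}{\delta}$, and choose $\lambda>\gamma$. An application of the Laver function property then yields a model $M$ as above with $A,C\in M$ and an embedding $\map{j}{M}{N}$ with $\veq{j(\ell)}{\kappa}=\eta$. The key computation is the evaluation of the $\kappa$-th entry of $j(\seq{A_\alpha}{\alpha<\kappa})$ inside $N$: by elementarity this entry is decoded from $\veq{j(\ell)}{\kappa}=\eta=\goedel{\gamma}{\delta}$ using the parameter $j(z)\cap\kappa=z$, and since $\gamma<\lambda$ ensures $\VV_\gamma^N=\VV_\gamma$, it equals $\Set{\xi<\kappa}{\VV_\gamma\models\varphi(\xi,\delta,z)}=A$. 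As $A\subseteq\kappa=\crit{j}$, we also have $j(A)\cap\kappa=A$, so this entry equals $j(A)\cap\kappa$. Finally, $\kappa$ is a regular cardinal in $N$ (since $\VV_{\kappa+1}\subseteq N$ decides this correctly) and $\kappa\in j(C)$ (since $C$ is unbounded in $\kappa$, $j(C)\cap\kappa=C$, and $j(C)$ is closed in $N$). Hence $N$ satisfies the statement that there is a regular cardinal $\alpha<j(\kappa)$ with $\alpha\in j(C)$ and $j(A)\cap\alpha=\veq{j(\seq{A_\beta}{\beta<\kappa})}{\alpha}$, and pulling this back through $j$ shows that $M$ satisfies the existence of a regular cardinal $\alpha<\kappa$ with $\alpha\in C$ and $A\cap\alpha=A_\alpha$. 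Since $M$ is transitive, closed under ${<}\kappa$-sequences and contains $A$, $C$ and $\seq{A_\alpha}{\alpha<\kappa}$, it computes regularity below $\kappa$ and the relevant set equalities correctly, so such an $\alpha$ genuinely exists, as required.

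The step I expect to require the most care is the coordination of the local decoding used to define $A_\alpha$ with the global decoding carried out inside $N$. This is precisely where the hypothesis $\POT{\kappa}\subseteq\HOD_z$ enters: the D\v{z}amonja--Hamkins result rules out a Laver function anticipating arbitrary subsets of $\kappa$, but an ordinal anticipating one is available, and the HOD assumption is what allows us to replace the target set $A$ by a single ordinal code $\goedel{\gamma}{\delta}$ that the Laver function can anticipate. Making this robust requires fixing the coding formula $\varphi$ uniformly, so that the same formula governs every $A_\alpha$ and every $A$, and, more delicately, ensuring that the evaluation level $\gamma$ chosen for $A$ satisfies $\gamma<\lambda$ so that $N$ computes $\VV_\gamma$ correctly; both points are handled by folding the evaluation level into the ordinal code and choosing $\lambda$ large. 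One should also check that $M$ correctly computes the sequence $\seq{A_\alpha}{\alpha<\kappa}$ from $\ell$ and $z$, which follows from transitivity together with the fact that $\gamma_\alpha<\kappa$ for all $\alpha<\kappa$.
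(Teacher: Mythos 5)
Your argument is correct, and it shares the paper's backbone --- the ordinal anticipating Laver function of Theorem \ref{theorem:OrdinalAntiLaver} combined with a $\HOD_z$-coding of subsets of $\kappa$ by ordinals --- but it decomposes the work differently, in a way worth comparing. The paper interposes Theorem \ref{theorem:LaverHODStrUnfold}: from $\scrL$ it builds a \emph{full} Laver function $\ell_z$ by letting $\ell_z(\gamma)$ be the $\scrL(\gamma)$-th element of the canonical well-ordering of $\HOD_{z\cap\gamma}^{\VV_\kappa}$, and verifying that $j(\ell_z)(\kappa)$ is the intended target set requires aligning the local well-orderings with the global one, which the paper does via a cardinal $\rho$ with $\VV_\rho\prec_{\Sigma_2}\VV$ and upward absoluteness of $\Sigma_2$-statements into $\VV_{j(\kappa)}^{N^\prime}$; the diamond sequence is then read off as $A_\alpha=\ell_z(\alpha)$ whenever $\ell_z(\alpha)\subseteq\alpha$. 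You bypass this intermediate theorem entirely: you code $A$ by a definability pair $\goedel{\gamma}{\delta}$ with $A=\Set{\xi<\kappa}{\VV_\gamma\models\varphi(\xi,\delta,z)}$, folding the evaluation level $\gamma$ into the anticipated ordinal, so that the only absoluteness needed in the transfer is $\VV_\gamma^N=\VV_\gamma$, which comes for free from $\VV_\lambda\subseteq N$ once $\lambda>\gamma$. This makes your verification more elementary (no $\Sigma_2$-correct levels, no comparison of canonical well-orderings), at the cost of not producing the paper's Theorem \ref{theorem:LaverHODStrUnfold}, which has independent interest as a strengthening of Hamkins' Laver function theorem. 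Three small points to tighten, none of them gaps: Theorem \ref{theorem:OrdinalAntiLaver}.(ii) quantifies universally over suitable models $M$, so you should say you \emph{choose} $M$ (e.g.\ as the transitive collapse of an elementary submodel of $\HH{\kappa^+}$ closed under ${<}\kappa$-sequences) containing $\ell$, $z$, $A$ and $C$; the fact that $M$ computes $\seq{A_\alpha}{\alpha<\kappa}$ correctly uses not only transitivity but also ${}^{{<}\kappa}M\subseteq M$ together with the inaccessibility of $\kappa$, which yield $\VV_\beta^M=\VV_\beta\in M$ for all $\beta<\kappa$; and $N\models\anf{\kappa\text{ is regular}}$ follows from downward absoluteness of regularity to transitive models rather than from $\VV_{\kappa+1}\subseteq N$ as such.
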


 The results of \cite{MR2279655} already show that the above statement cannot be extended to parameters that are subsets of $\kappa^+$, because, by starting with a strongly unfoldable cardinal $\kappa$ in $\LL$, they can be used to produce a cardinal preserving forcing extension of $\LL$ of the form $\LL[A]$ with $A\subseteq\kappa^+$, in which $\kappa$ is still strongly unfoldable and $\Diamond_\kappa(\mathrm{Reg})$ fails. 
 Moreover, since the assumption $\VV=\HOD$ can be forced to hold by class forcings with arbitrary high degrees of closure and the total indescribability of a given cardinal is preserved by sufficiently closed forcings, the results of \cite{MR2279655} show that  the assumption $\VV=\HOD$ does not imply that $\Diamond_\kappa(\mathrm{Reg})$ holds at all totally indescribable cardinals $\kappa$. 
 %
 %
 Finally, it is  easy to show that the converse of the implication of Theorem \ref{theorem:HODdiamonds} can consistently fail. 
 If $\kappa$ is a strongly unfoldable cardinal whose strong unfoldability is indestructible under forcing with the partial order $\Add{\kappa}{\kappa^+}$ that adds $\kappa^+$-many Cohen subsets to $\kappa$ (see \cite{MR2467213}) and $G$ is $\Add{\kappa}{\kappa^+}$-generic over the ground model $\VV$, then standard arguments  show that, in $\VV[G]$, the principle $\Diamond_\kappa(\mathrm{Reg})$ holds and we have $\POT{\kappa}\nsubseteq\HOD_z$ for all subsets $z$ of $\kappa$.

 A topic closely related to the validity of $\Diamond$-principles is the generalization of Laver's classical result on the existence of \emph{Laver functions} for supercompact cardinals (see \cite{MR472529}) to other large cardinal notions. 
  Since the existence of such functions directly yields a $\Diamond_\kappa(\mathrm{Reg})$-sequence, the main result of \cite{MR2279655} shows that the non-existence of Laver functions for strong unfoldability is consistent. 
  In Section \ref{section:laver}, we will prove that \emph{ordinal anticipating Laver functions for strong unfoldability} (see {\cite[Definition 16]{MR2279655}}) always exist. 
  More specifically, we construct a class function $\map{\scrL}{\On}{\On}$ with the property that for every strongly unfoldable cardinal $\kappa$, we have $\scrL[\kappa]\subseteq\kappa$ and for all ordinals $\alpha$ and $\lambda$, and every transitive $\ZF^-$-model $M$ of cardinality $\kappa$ with ${\scrL\restriction\kappa}\in M$ and ${}^{{<}\kappa}M\subseteq M$, there exists a transitive set $N$ with $\VV_\lambda\subseteq N$
 and an elementary embedding $\map{j}{M}{N}$ 
 with $\crit{j}=\kappa$, $j(\kappa)\geq\lambda$ and $j({\scrL\restriction\kappa})(\kappa)=\alpha$.  
  This result strengthens {\cite[Theorem 10]{MR2467213}} that proves the existence of a class function $\map{\mathscr{F}}{\On}{\On}$ whose restrictions to strongly unfoldable cardinals possess the \emph{Menas property} for strong unfoldability (see {\cite[Section 3]{MR2467213}}).\footnote{After a first version of this paper was submitted, the author noticed that the statement of Theorem \ref{theorem:OrdinalAntiLaver}.(ii) below can also be easily  derived from arguments made in the proof of  {\cite[Theorem 10]{MR2467213}} by slightly modifying the constructed class function $F$ and placing transitive $\ZF^-$-models in transitive $\ZFC^-$-models of the same cardinality.  
  While the arguments in \cite{MR2467213} rely on the degrees of failures of strong unfoldability, the proof presented below is motivated by related constructions for stronger large cardinal notions (see, for example, {\cite[Observation 1]{MR2279655}} and \cite{MR472529}) that diagonalize against possible counterexamples. 
  }    
  %
  %
  Finally, these constructions allow us to show that the assumption of Theorem \ref{theorem:HODdiamonds} implies the existence of a full Laver function for strong unfoldability (as defined in {\cite[Section 1]{Hamkins:LaverDiamond}} and {\cite[Section 1]{MR1789737}}). 
 This implication will then directly provide a proof of Theorem \ref{theorem:HODdiamonds}.

In Section \ref{section:FilterLayered}, we will provide another application of the techniques developed in this paper by utilizing them in the study of strong chain conditions of partial orders and their productivity. 
 The results of \cite{MR3620068} show that the weak compactness of a cardinal $\kappa$ can be characterized through the equivalence of the $\kappa$-chain condition to a combinatorial property of partial orders that is, in general, stronger than the chain condition. 
 This work is motivated by a question of Todor\v{c}evi\'{c} (see {\cite[Question 8.4.27]{MR2355670}}), asking whether all regular cardinals $\kappa>\aleph_1$ with the property that the $\kappa$-chain condition is equivalent to the \emph{productive $\kappa$-chain condition}\footnote{Remember that, given an uncountable regular $\kappa$, a partial order $\PPP$ has the \emph{productive $\kappa$-chain condition} if for every partial order $\QQQ$ satisfying the $\kappa$-chain condition, the product $\PPP\times\QQQ$ also satisfies the $\kappa$-chain condition. An easy argument shows that the $\kappa$-Knaster property implies the productive $\kappa$-chain condition.} are weakly compact.\footnote{If $\kappa$ is weakly compact, then the $\kappa$-chain condition is equivalent to the $\kappa$-Knaster property (see {\cite[Proposition 1.1]{MR3620068}}). A series of deep results shows that, for regular cardinals $\kappa>\aleph_1$, many important consequences of weak compactness can be derived from the productivity of  the $\kappa$-chain condition. An overview of these results can be found in \cite{MR3271280}.}

  The  theory developed in \cite{MR3620068} suggests that large cardinal properties much stronger than weak compactness ({e.g.}  supercompactness) can be characterized in similar ways and it already provides a strong chain condition, called \emph{filter layeredness}, as a natural candidate for such generalizations. 
 In \cite{MR4082998}, Cody proved that such characterizations are not provable by showing that, consistently, filter layeredness can be equivalent to the $\kappa$-chain condition at the least weakly compact cardinal $\kappa$.  
  Since Cody used large cardinal assumptions close to supercompactness in his consistency proof, the question whether the equivalence between these two chain conditions has very high consistency strength remained open. 
  In Section \ref{section:FilterLayered}, we will derive a negative answer to this questions by showing that filter layeredness is equivalent to the $\kappa$-chain condition whenever $\kappa$ is a strongly unfoldable cardinal. 


\section{Shrewdness is equivalent to strong unfoldability}\label{section:equiv}

In this section, we prove Theorem \ref{theorem:Equivalent}. Our arguments are based on the Magidor-style embedding characterization of shrewd cardinals developed in {\cite[Section 2]{SRminus}}. 
 The following lemma slightly strengthens {\cite[Lemma 2.1]{SRminus}} by  showing that, in the formulation of these embedding characterizations, we can also assume that the domain models of our embeddings are closed under sequences of length smaller than the critical point of the embedding. 
 Since this closure property will be crucial for later arguments, we present a complete proof of this stronger result.

\begin{lemma}\label{lemma:ShrewdEmbeddings}
 The following statements are equivalent for every  cardinal $\kappa$: 
 \begin{enumerate}[leftmargin=0.9cm]
  \item $\kappa$ is a shrewd cardinal.  
  
    \item For all cardinals $\theta>\kappa$ and all $z\in\HH{\theta}$, there exist 
  cardinals $\bar{\kappa}<\bar{\theta}<\kappa$, 
  an elementary submodel $X$ of $\HH{\bar{\theta}}$  with  $(\bar{\kappa}+1)\cup{}^{{<}\bar{\kappa}}X\subseteq X$  
  and an elementary embedding $\map{j}{X}{\HH{\theta}}$ with $j\restriction\bar{\kappa}=\id_{\bar{\kappa}}$,  $j(\bar{\kappa})=\kappa$ and $z\in\ran{j}$. 
 \end{enumerate}
\end{lemma}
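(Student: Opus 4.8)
The plan is to establish the two implications separately. The implication (ii)$\Rightarrow$(i) is a routine pull-back, so the heart of the matter is (i)$\Rightarrow$(ii); compared with \cite[Lemma 2.1]{SRminus} the genuinely new requirement is the closure ${}^{{<}\bar\kappa}X\subseteq X$, and I expect this to be the only real obstacle. The guiding idea for (i)$\Rightarrow$(ii) is to encode the complete first-order theory of $\HH{\theta}$ (with names for all the relevant parameters) as a single subset of $\VV_\kappa$ and then let shrewdness reflect this theory; the embedding $j$ is afterwards reconstructed from the fact that the reflected theory is realised both low (in some $\HH{\bar\theta}$) and high (in $\HH{\theta}$).

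In detail, fix a cardinal $\theta>\kappa$, a parameter $z\in\HH{\theta}$, and a well-ordering $<^{\ast}$ of $\HH{\theta}$, so that $(\HH{\theta},\in,<^{\ast})$ carries definable Skolem functions. Let $\calL$ expand $\calL_\in$ by $<^{\ast}$, two constants $\dot c,\dot d$ and a constant $\underline a$ for each $a\in\VV_\kappa$, and let $T$ be the complete $\calL$-theory of $(\HH{\theta},\in,<^{\ast},(a)_{a\in\VV_\kappa},\kappa,z)$, where $\underline a$ is interpreted by $a$, $\dot c$ by $\kappa$ and $\dot d$ by $z$. Coding formulas so that the rank of the code of a formula is controlled by the ranks of the constants occurring in it, $T$ is coded by a set $A\subseteq\VV_\kappa$ with the key feature that $A\cap\VV_\alpha$ codes precisely the complete theory $T_\alpha$ of the same structure in the sublanguage $\calL_\alpha$ that uses only the constants $\underline a$ with $a\in\VV_\alpha$. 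Now let $\Phi(A,x)$ assert that $x$ is inaccessible and that $A$ is the complete $\calL_x$-theory of some structure $(\HH{\delta},\in,R,(a)_{a\in\VV_x},x,w)$ with $\delta$ a cardinal and $R$ a well-ordering of $\HH{\delta}$; this is first-order over any $\VV_\gamma$ with $\gamma>\theta$, and the choice $\delta=\theta$, $R={<^{\ast}}$, $w=z$ shows $\VV_\gamma\models\Phi(A,\kappa)$.

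Applying the shrewdness of $\kappa$ yields $\alpha<\beta<\kappa$ with $\VV_\beta\models\Phi(A\cap\VV_\alpha,\alpha)$, hence a cardinal $\bar\theta<\kappa$, a well-ordering $R$ of $\HH{\bar\theta}$ and a point $\bar z$ such that the complete $\calL_\alpha$-theory of $(\HH{\bar\theta},\in,R,(a)_{a\in\VV_\alpha},\alpha,\bar z)$ equals $T_\alpha$, while $\alpha$ is inaccessible (and $\alpha<\bar\theta$, since $\alpha\in\HH{\bar\theta}$). Since $T_\alpha$ is also the complete $\calL_\alpha$-theory of $(\HH{\theta},\in,<^{\ast},(a)_{a\in\VV_\alpha},\kappa,z)$, both structures satisfy the same complete theory in a language with definable Skolem functions, so the Skolem hulls of the interpreted constants are isomorphic via the unique map $\psi$ fixing each $a\in\VV_\alpha$ and sending $\alpha\mapsto\kappa$, $\bar z\mapsto z$. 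Setting $\bar\kappa:=\alpha$, $X:=\mathrm{Hull}^{(\HH{\bar\theta},R)}(\VV_\alpha\cup\{\alpha,\bar z\})\prec\HH{\bar\theta}$ and letting $\map{j}{X}{\HH{\theta}}$ be $\psi$ followed by the inclusion of the $\HH{\theta}$-hull, one reads off immediately that $j$ is elementary, $j\restriction\bar\kappa=\id_{\bar\kappa}$, $j(\bar\kappa)=\kappa$, $z\in\ran{j}$, and $(\bar\kappa+1)\subseteq X$.

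The remaining, and principal, difficulty is the closure ${}^{{<}\bar\kappa}X\subseteq X$, and this is exactly where the inaccessibility of $\bar\kappa=\alpha$ (forced through $\Phi$) is decisive. Given $\eta<\bar\kappa$ and $s\colon\eta\to X$, write each value as $s(\xi)=t_\xi^{\HH{\bar\theta}}(\vec a_\xi,\alpha,\bar z)$ with $t_\xi$ a Skolem term and $\vec a_\xi$ a tuple from $\VV_\alpha$. As $\bar\kappa$ is inaccessible, $\VV_\alpha$ is closed under sequences of length ${<}\bar\kappa$, so both $\langle t_\xi:\xi<\eta\rangle$ and $\langle\vec a_\xi:\xi<\eta\rangle$ lie in $\VV_\alpha\subseteq X$; feeding them, together with $\eta,\alpha,\bar z\in X$, into the definable Skolem-term evaluation of $(\HH{\bar\theta},R)$ exhibits $s$ as definable over $X$, whence $s\in X$. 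For the converse implication (ii)$\Rightarrow$(i), given $\Phi$, $\gamma>\kappa$ and $A\subseteq\VV_\kappa$ with $\VV_\gamma\models\Phi(A,\kappa)$, I would apply (ii) to a cardinal $\theta>\betrag{\VV_{\gamma+1}}$ and $z=\langle A,\gamma\rangle$, obtaining $\map{j}{X}{\HH{\theta}}$ with $\crit{j}=\bar\kappa$, $j(\bar\kappa)=\kappa$ and $z\in\ran{j}$. Writing $\langle\bar A,\bar\gamma\rangle=j^{-1}(z)$, elementarity of $j$ and the correctness of $\HH{\theta}$ about $\VV_\gamma$ give $\VV_{\bar\gamma}\models\Phi(\bar A,\bar\kappa)$ with $\bar\kappa<\bar\gamma<\kappa$; since (ii) readily yields that $\kappa$, and hence by elementarity $\bar\kappa$, is inaccessible, the closure of $X$ forces $\VV_{\bar\kappa}\subseteq X$ and $j\restriction\VV_{\bar\kappa}=\id_{\VV_{\bar\kappa}}$, so that $\bar A=A\cap\VV_{\bar\kappa}$ and $\alpha=\bar\kappa$, $\beta=\bar\gamma$ witness the required instance of shrewdness.
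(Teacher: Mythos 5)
Your direction (ii)$\Rightarrow$(i) is essentially the paper's own argument (pull back $\Phi$ along $j$ after choosing $\theta$ large and correct about $\VV_\gamma$, use closure to see $\VV_{\bar\kappa}\subseteq X$ and $j\restriction\VV_{\bar\kappa}=\id$), and it is fine modulo the same terse inaccessibility verification that the paper also glosses over. The gap is in (i)$\Rightarrow$(ii), at exactly the step you single out as the principal difficulty: the closure of the Skolem hull $X=\mathrm{Hull}^{(\HH{\bar\theta},R)}(\VV_{\bar\kappa}\cup\{\bar\kappa,\bar z\})$ under sequences of length ${<}\bar\kappa$. Two things break. First, \emph{the definable Skolem-term evaluation of $(\HH{\bar\theta},\in,R)$ does not exist}: for each fixed term, evaluation is definable, but a single formula evaluating arbitrary \emph{coded} terms $t_\xi$ at given parameters is precisely a satisfaction predicate for $(\HH{\bar\theta},\in,R)$, which Tarski's undefinability theorem rules out over that structure itself; so there is no way to "feed" the sequence $\langle t_\xi\mid\xi<\eta\rangle$ into anything definable over $\HH{\bar\theta}$. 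Second, even if $s$ were definable over $(\HH{\bar\theta},\in,R)$ from parameters in $X$, concluding $s\in X$ requires in addition that $s\in\HH{\bar\theta}$ (being "definable over $X$" never by itself puts an object \emph{inside} $X$), and nothing in your $\Phi$ guarantees this: $\Phi$ only demands that $\delta$ be a cardinal, so the reflected $\bar\theta$ can be singular of small cofinality, in which case $\HH{\bar\theta}$ simply does not contain the relevant sequences.

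Moreover, this is not a presentational gap that a cleverer argument could close. Run your construction with $\theta$ equal to the limit of the first $\omega$ fixed points of the $\beth$-function above $\kappa$. The theory $T_\alpha$ then contains, using only the constant $\dot c$, the sentences "cardinals are unbounded" and "every cardinal lies below some finite iterate of the operation `least $\beth$-fixed point above $\dot c$'\,". Hence in \emph{any} structure $(\HH{\delta},\in,R,(a)_{a\in\VV_\alpha},\alpha,w)$ realizing $T_\alpha$, the ordinals that the structure takes to be these finite iterates above $\alpha$ are cofinal in $\delta$; each is definable from the constant for $\alpha$ and so lies in your hull $X$, while the $\omega$-sequence enumerating them has transitive closure of size $\delta$ and therefore is not even an element of $\HH{\delta}$, let alone of $X$. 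So for this $\theta$ closure fails for every possible reflected witness; indeed clause (ii), read literally for such $\theta$, is false, and both the statement and the paper's proof tacitly require $\cof{\theta}\geq\kappa$ (which is all the paper's applications use, since there one gets to choose $\theta$). This shows where the missing idea lies: closure cannot be \emph{derived} downstairs from the hull construction; it has to be \emph{reflected}. That is what the paper does differently. It first fixes, upstairs, an elementary submodel $Y\prec\HH{\theta}$ of cardinality $\kappa$ with $\kappa\cup\{\kappa,z\}\cup{}^{{<}\kappa}Y\subseteq Y$, codes the satisfaction relation of $Y$ via a bijection $\map{b}{\kappa}{Y}$ into $A\subseteq\VV_\kappa$, and builds the closure requirement $(\delta+1)\cup{}^{{<}\delta}X\subseteq X$ into the reflected formula $\Phi$ itself, alongside the requirement that $A$ be realized as the satisfaction relation of $X$ \emph{and} of the ambient $\HH{\theta'}$. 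Shrewdness then hands you a closed $X$ directly (the closure clause is absolute between $\VV_\beta$ and $\VV$), and the embedding is $b\circ\bar b^{-1}$, with no hulls taken after reflection.
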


\begin{proof}
 Assume that (i) holds. 
  Then we can find an $\calL_\in$-formula $\Phi(v_0,v_1)$ with the property that for every ordinal $\gamma$ and all $A,\delta\in\VV_\gamma$, the statement $\Phi(A,\delta)$ holds in $\VV_\gamma$ if and only if $\gamma$ is a limit ordinal and the following statements hold in $\VV_\gamma$: 
  \begin{enumerate}
     \item[(a)] There exist unboundedly many strong limit cardinals. 
   
   \item[(b)] $\delta$ is an inaccessible cardinal. 
   
   \item[(c)] There exists a cardinal $\theta>\delta$, a subset $X$ of the class $\HH{\theta}$ and a bijection $\map{b}{\delta}{X}$ such that the following statements hold: 
    \begin{itemize}    
     \item The class $\HH{\theta}$ is a set. 
    
     \item $(\delta+1)\cup{}^{{<}\delta}X\subseteq X$.
     
     \item $b(0)=\delta$ and $b(\omega\cdot(1+\alpha))=\alpha$ for all $\alpha<\delta$. 
    
     
     \item Given $\alpha_0,\ldots,\alpha_{n-1}<\delta$ and $a\in\mathsf{Fml}$ represents a formula with $n$ free variables, we have 
         \begin{equation*}
          \begin{split}
            \langle a,\alpha_0,\ldots,\alpha_{n-1}\rangle\in A ~ & \Longleftrightarrow ~ \mathsf{Sat}(X,\langle b(\alpha_0),\ldots,b(\alpha_{n-1})\rangle,a) \\
            & \Longleftrightarrow ~ \mathsf{Sat}(\HH{\theta},\langle b(\alpha_0),\ldots,b(\alpha_{n-1})\rangle,a),  
           \end{split}
         \end{equation*}
      where $\mathsf{Fml}$ denotes the set of  formalized $\calL_\in$-formulas and $\mathsf{Sat}$ denotes the  formalized satisfaction relation for $\calL_\in$-formulas.\footnote{Details on these formalizations can be found in {\cite[Section I.9]{MR750828}}. Note that the classes $\mathsf{Fml}$ and $\mathsf{Sat}$ are defined by $\Sigma_1$-formulas without parameters.}  
    \end{itemize}
  \end{enumerate}
 
 Fix a cardinal $\theta>\kappa$ and some $z\in\HH{\theta}$.
 Pick an elementary submodel $Y$ of $\HH{\theta}$ of cardinality $\kappa$ with $\kappa\cup\{\kappa,z\}\cup{}^{{<}\kappa}Y\subseteq Y$ and a bijection $\map{b}{\kappa}{Y}$ with  $b(0)=\kappa$, $b(1)=z$ and $b(\omega\cdot(1+\alpha))=\alpha$ for all $\alpha<\kappa$. 
 Define $A$ to be the set of all tuples $\langle a,\alpha_0,\ldots,\alpha_{n-1}\rangle$ with the property that $a\in\mathsf{Fml}$ represents an  $\calL_\in$-formula with $n$ free variables,  $\alpha_0,\ldots,\alpha_{n-1}<\kappa$ and $\mathsf{Sat}(Y,\langle b(\alpha_0),\ldots,b(\alpha_{n-1})\rangle,a)$ holds.  
 Now, pick a cardinal $\lambda>\theta$ that is a limit of fixed points of the $\beth$-functions. 
 %
 Then the statement $\Phi(A,\kappa)$ holds in $\VV_\lambda$ and the shrewdness of $\kappa$ yields ordinals $\bar{\kappa}<\beta<\kappa$ with the property that $\Phi(A\cap\VV_{\bar{\kappa}},\bar{\kappa})$ holds in $\VV_\beta$. 
 Then $\beta$ is a limit ordinal and, since $\VV_\beta$ is correct about both strong limit and regular cardinals less than $\beta$, it follows that $\bar{\kappa}$ is an inaccessible cardinal and $\beta$ is a strong limit cardinal.  
 Similar absoluteness and correctness considerations show that there is a cardinal $\bar{\kappa}<\bar{\theta}<\beta$ with $\HH{\bar{\theta}}\in\VV_\beta$, a subset $X$ of $\HH{\bar{\theta}}$ and a bijection $\map{\bar{b}}{\bar{\kappa}}{X}$ such that  $(\bar{\kappa}+1)\cup{}^{{<}\bar{\kappa}}X\subseteq X$, $\bar{b}(0)=\bar{\kappa}$, $\bar{b}(\omega\cdot(1+\alpha))=\alpha$ for all $\alpha<\bar{\kappa}$ and 
          \begin{equation*}
          \begin{split}
            \langle a,\alpha_0,\ldots,\alpha_{n-1}\rangle\in A\cap\VV_{\bar{\kappa}} ~ & \Longleftrightarrow ~ \mathsf{Sat}(X,\langle \bar{b}(\alpha_0),\ldots,\bar{b}(\alpha_{n-1})\rangle,a) \\
            & \Longleftrightarrow ~ \mathsf{Sat}(\HH{\bar{\theta}},\langle\bar{b}(\alpha_0),\ldots,\bar{b}(\alpha_{n-1})\rangle,a)  
           \end{split}
         \end{equation*}
         for all $\alpha_0,\ldots,\alpha_{n-1}<\bar{\kappa}$ and every $a\in\mathsf{Fml}$ that represents a formula with $n$ free variables. 
    This directly implies that $X$ is an elementary submodel of $\HH{\bar{\theta}}$. 
    Moreover, given an $\calL_\in$-formula $\varphi(v_0,\ldots,v_{n-1})$ and  $\alpha_0,\ldots,\alpha_{n-1}<\bar{\kappa}$, we  have

    \begin{equation*}
     \begin{split}
      X\models\varphi(\bar{b}(\alpha_0),\ldots,\bar{b}(\alpha_{n-1})) ~ & \Longleftrightarrow ~   \langle \ulcorner\varphi\urcorner,\alpha_0,\ldots,\alpha_{n-1}\rangle\in A\cap\VV_{\bar{\kappa}} \\ 
      ~ \Longleftrightarrow  ~ \langle \ulcorner\varphi\urcorner,\alpha_0,\ldots,\alpha_{n-1}\rangle\in A ~ & \Longleftrightarrow  ~  \HH{\theta}\models\varphi(b(\alpha_0),\ldots,b(\alpha_{n-1})),   
   \end{split}
  \end{equation*}
  where $\ulcorner\varphi\urcorner$ denotes the canonical element of $\mathsf{Fml}$ that represents the formula $\varphi$. 
 This allows us to conclude that 
   the map $\map{j=b\circ\bar{b}^{{-}1}}{X}{\HH{\theta}}$ is an elementary embedding with $j\restriction\bar{\kappa}=\id_{\bar{\kappa}}$, $j(\bar{\kappa})=\kappa$ and $z\in\ran{j}$. In particular, we know that (ii) holds in this case.

  Now, assume that (ii) holds. Fix an $\calL_\in$-formula $\Phi(v_0,v_1)$, an ordinal $\gamma>\kappa$ and a subset $A$ of $\VV_\kappa$ such that $\Phi(A,\kappa)$ holds in $\VV_\gamma$.  
  Pick a   cardinal $\theta>\gamma$ that is a limit of fixed points of the $\beth$-function.
  Then $\VV_\gamma\in\HH{\theta}$ and we can now use our assumption to find cardinals $\bar{\kappa}<\bar{\theta}<\kappa$, an elementary submodel $X$ of $\HH{\bar{\theta}}$ and  an elementary embedding $\map{j}{X}{\HH{\theta}}$ such that   $(\bar{\kappa}+1)\subseteq{}^{{<}\bar{\kappa}}X\subseteq X$, $j\restriction\bar{\kappa}=\id_{\bar{\kappa}}$,  $j(\bar{\kappa})=\kappa$ and $A,\gamma\in\ran{j}$. 
  Pick $\beta\in X\cap(\bar{\kappa},\bar{\theta})$ with $j(\beta)=\gamma$. Since elementarity implies that $\bar{\theta}$ is also a limit of fixed points of the $\beth$-function, we know that $\VV_\beta\in\HH{\bar{\theta}}$. 
  The closure properties of $X$ now imply that $\bar{\kappa}$ is an inaccessible cardinal and we can use the elementarity of $j$ to show that $\kappa$ is also inaccessible. 
  In particular, we have $\VV_{\bar{\kappa}}\subseteq X$ and $j\restriction\VV_{\bar{\kappa}}=\id_{\VV_{\bar{\kappa}}}$. 
  But this  shows that $A\cap\VV_{\bar{\kappa}}\in X$ and $j(A\cap\VV_{\bar{\kappa}})=A$. 
  Elementarity now allows us to conclude that $\Phi(A\cap\VV_{\bar{\kappa}},\bar{\kappa})$ holds in $\VV_\beta$.  
\end{proof}

 We are now ready to show that Villaveces' notion of strongly unfoldable cardinals coincides with Rathjen's notion of shrewd cardinals.

\begin{proof}[Proof of Theorem \ref{theorem:Equivalent}]
 First, assume that $\kappa$ is a strongly unfoldable cardinal.
  Fix an $\calL_\in$-formula $\Phi(v_0,v_1)$, an ordinal  $\gamma>\kappa$  and  a subset $A$  of $\VV_\kappa$ with the property  that $\Phi(A,\kappa)$ holds in $\VV_\gamma$.  
 Using the inaccessibility of $\kappa$, we can find an elementary submodel $M$ of $\HH{\kappa^+}$ of cardinality $\kappa$ with $\VV_\kappa\cup\{\kappa,A\}\cup{}^{{<}\kappa}M\subseteq M$. 
 By our assumptions on $\kappa$, there exists a transitive set $N$ with $\VV_{\gamma+1}\subseteq N$ and an elementary embedding $\map{j}{M}{N}$ with $\crit{j}=\kappa$ and $j(\kappa)\geq\gamma+1$. 
  Since $j(A)\cap\VV_\kappa=A$, we know that, in $N$, there exist ordinals $\delta<\varepsilon<j(\kappa)$ with the property that $\Phi(j(A)\cap\VV_\delta,\delta)$ holds in $\VV_\varepsilon$. 
 Using the elementarity of $j$ and the fact that $\VV_\kappa$ is a subset of $M$, we can conclude that there exist ordinals $\alpha<\beta<\kappa$ such that $\Phi(A\cap\VV_\alpha,\alpha)$ holds in $\VV_\beta$.

  Next, assume that $\kappa$ is a shrewd cardinal,  $\lambda>\kappa$ is an ordinal and $M$ is a transitive $\ZF^-$-model  of cardinality $\kappa$ with $\kappa\in M$ and ${}^{{<}\kappa}M\subseteq M$. 
  Since shrewdness implies total indescribability (see {\cite[Section 2]{zbMATH02168085}} as well as {\cite[Chapter 9, Lemma 4.2]{MR3408725}}), we know that $\kappa$ is inaccessible. 
  Pick a cardinal $\theta>\lambda$ that is a limit of fixed points of the $\beth$-function 
  %
  and   use Lemma \ref{lemma:ShrewdEmbeddings} to find    cardinals $\bar{\kappa}<\bar{\theta}<\kappa$, 
  an elementary submodel $X$ of $\HH{\bar{\theta}}$   %
  and an elementary embedding $\map{j}{X}{\HH{\theta}}$ such that  $\bar{\kappa}+1\subseteq X$,    $j\restriction\bar{\kappa}=\id_{\bar{\kappa}}$,  $j(\bar{\kappa})=\kappa$ and $\lambda,M\in\ran{j}$. 
  Then $\bar{\theta}$ is a limit of fixed points of the $\beth$-function, because elementarity and our choice of $\theta$ ensure that the Power Set Axiom holds in $\HH{\bar{\theta}}$ and hence $\HH{\bar{\theta}}$ correctly computes the $\beth$-function up to $\bar{\theta}$. 
  In particular, we have   $\VV_\xi\in\HH{\bar{\theta}}$ for every $\xi<\bar{\theta}$. 
  Pick $\bar{\lambda}$ and $\bar{M}$ in $X$ satisfying $j(\bar{\lambda})=\lambda$ and $j(\bar{M})=M$. 
  We then have $\bar{\lambda}<\bar{\theta}<\kappa$, $\VV_\kappa\subseteq M$, $\bar{M}\subseteq X$ and $\VV_{\bar{\lambda}}\in X$. 
  This shows that the model $M$ and the embedding $j\restriction\bar{M}$ witness that there exists a transitive set $N$ with $\VV_{\bar{\lambda}}\subseteq N$ and an elementary embedding $\map{i}{\bar{M}}{N}$ with $\crit{i}=\bar{\kappa}$ and $i(\bar{\kappa})>\bar{\lambda}$. 
  Since this statement can be formulated by a $\Sigma_1$-formula using the sets $\bar{\kappa}$, $\bar{M}$ and $\VV_{\bar{\lambda}}$ as parameters and all of these parameters are elements of $X$, 
  we can apply the \emph{$\Sigma_1$-Reflection Principle}\footnote{The $\Sigma_1$-Reflection Principle states  that for every uncountable cardinal $\mu$, the set $\HH{\mu}$ is a $\Sigma_1$-elementary submodel of $\VV$ (see {\cite{MR189983}}).} to conclude that  the given statement also holds in $\HH{\bar{\theta}}$.  
 But then we  know that the  statement holds in $X$ and hence the elementarity of $j$ yields a transitive set $N$ with $\VV_\lambda\subseteq N$ and an elementary embedding $\map{i}{M}{N}$ with $\crit{i}=\kappa$ and $i(\kappa)>\lambda$.  
\end{proof}


\section{Ordinal anticipating Laver functions}\label{section:laver}

We now use the results of the previous section to show that strong unfoldability implies the validity of certain non-trivial guessing principles. 
 In the terminology of \cite{Hamkins:LaverDiamond}, the following result proves the existence of \emph{ordinal anticipating Laver function for strong unfoldability}.  
 Remember that a cardinal $\kappa$ is \emph{$\Sigma_2$-reflecting} if it is  inaccessible  and $\VV_\kappa\prec_{\Sigma_2}\VV$ holds. All strongly unfoldable cardinals are $\Sigma_2$-reflecting.\footnote{This follows directly from a combination of Theorem \ref{theorem:Equivalent} and {\cite[Corollary 2.3]{SRminus}}. But the statement can also be directly proven through a small variation of the corresponding argument for strong cardinals (see {\cite[Exercise 26.6]{MR1994835}}).}

\begin{theorem}\label{theorem:OrdinalAntiLaver}
 There exists a class function $\map{\scrL}{\On}{\On}$ such that the following statements hold: 
 \begin{enumerate}
  \item If $\kappa$ is a $\Sigma_2$-reflecting cardinal, then $\mathscr{L}[\kappa]\subseteq\kappa$. 
  
  \item If $\kappa$ is a strongly unfoldable cardinal, $\alpha,\lambda\in\On$ and $M$ is a transitive $\ZF^-$-model of cardinality $\kappa$ satisfying ${\scrL\restriction\kappa}\in M$ and ${}^{{<}\kappa}M\subseteq M$, then there exists a transitive set $N$ with $\VV_\lambda\subseteq N$ and an elementary embedding $\map{j}{M}{N}$ with $\crit{j}=\kappa$, $j(\kappa)\geq\lambda$ and $j({\scrL\restriction\kappa})(\kappa)=\alpha$.   
 \end{enumerate}
\end{theorem}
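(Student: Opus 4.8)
The plan is to construct $\scrL$ by transfinite recursion, diagonalizing against potential failures of the anticipation property, in direct analogy with the Laver-function constructions for stronger cardinals and with the second half of the proof of Theorem \ref{theorem:Equivalent}. Say that an ordinal $\delta$ is \emph{active} (relative to $f=\scrL\restriction\delta$) if there are ordinals $\alpha,\lambda$ and a transitive $\ZF^-$-model $M$ of cardinality $\delta$ with $f\in M$ and ${}^{{<}\delta}M\subseteq M$ for which no transitive set $N$ with $\VV_\lambda\subseteq N$ admits an elementary embedding $\map{i}{M}{N}$ with $\crit{i}=\delta$, $i(\delta)\geq\lambda$ and $i(f)(\delta)=\alpha$; I call such a triple $(\alpha,\lambda,M)$ a \emph{counterexample}. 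Having defined $f=\scrL\restriction\delta$, I set $\scrL(\delta)=\alpha_\delta$, where, if $\delta$ is active, $\lambda_\delta$ is the least threshold $\lambda$ occurring in a counterexample and $\alpha_\delta$ is the least value $\alpha$ occurring in a counterexample with threshold $\lambda_\delta$; otherwise I set $\scrL(\delta)=0$. No choice of the witnessing model $M$ is needed, so this is a single definable recursion yielding a class function $\map{\scrL}{\On}{\On}$.

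The technical core is an absoluteness lemma showing that anticipation can always be witnessed by a \emph{small} target. Given $\map{i}{M}{N}$ witnessing that $\alpha$ is anticipated at threshold $\lambda$, I would pass to an elementary submodel $N_0\prec N$ of cardinality $\beth_\mu$ containing $\VV_\mu\cup\ran{i}$ for $\mu=\max\{\delta,\alpha,\lambda\}+\omega$ and take its transitive collapse $\pi$. Since $\VV_\mu\subseteq N_0$ is transitive, $\pi$ fixes $\VV_\mu$ pointwise, so $\pi\circ i$ is again an elementary embedding witnessing anticipation of $\alpha$ at threshold $\lambda$, now into a transitive target of cardinality $\beth_\mu$. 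Consequently anticipation is $\Sigma_1$-expressible and non-anticipation is its negation, so ``$\delta$ is active with threshold $\lambda$ and value $\alpha$'' is uniformly $\Sigma_2$ in the parameters $\delta$ and $\scrL\restriction\delta$, and every sufficiently closed $\HH{\theta}$ (with $\theta$ a limit of fixed points of the $\beth$-function above the relevant ordinals) correctly computes the recursion for $\scrL$ up to any stage whose thresholds lie below $\theta$. Property (i) follows quickly: for $\Sigma_2$-reflecting $\kappa$ and $\delta<\kappa$ one has $\scrL\restriction\delta\in\VV_\kappa=\HH{\kappa}$ by induction, and all suitable models of cardinality $\delta$ lie in $\VV_\kappa$; if $\delta$ is active, the true $\Sigma_2$-statement ``a counterexample exists'' reflects by $\VV_\kappa\prec_{\Sigma_2}\VV$ to a counterexample with $\lambda,\alpha<\kappa$, forcing $\lambda_\delta<\kappa$ and then $\alpha_\delta=\scrL(\delta)<\kappa$.

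For (ii), suppose toward a contradiction that $\kappa$ is strongly unfoldable but anticipation fails for some $\alpha,\lambda,M$; then $\kappa$ is active, and $\scrL(\kappa)=\alpha^*$ is the recorded least value at the least threshold $\lambda^*$, witnessed by a model $M^*$ admitting no good embedding. Since strongly unfoldable cardinals are $\Sigma_2$-reflecting, property (i) gives $\scrL\restriction\kappa\in\VV_\kappa$ with all thresholds below $\kappa$, so for a limit $\theta$ of fixed points of the $\beth$-function above $\max\{\kappa,\alpha^*,\lambda^*\}$ the model $\HH{\theta}$ computes $\scrL\restriction(\kappa+1)$ and the non-anticipation at stage $\kappa$ correctly. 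I now apply Lemma \ref{lemma:ShrewdEmbeddings} (via Theorem \ref{theorem:Equivalent}) with $z$ coding $(\scrL\restriction\kappa,M^*,\lambda^*,\alpha^*)$ to obtain $\bar\kappa<\bar\theta<\kappa$, an elementary submodel $X\prec\HH{\bar\theta}$ with $(\bar\kappa+1)\cup{}^{{<}\bar\kappa}X\subseteq X$, and $\map{j}{X}{\HH{\theta}}$ with $j\restriction\bar\kappa=\id_{\bar\kappa}$, $j(\bar\kappa)=\kappa$ and these parameters in $\ran{j}$. Writing $\bar f,\bar M,\bar\lambda,\bar\alpha$ for the preimages, elementarity together with the correctness of $X$ and $\HH{\theta}$ yields $\bar f=\scrL\restriction\bar\kappa$, $\scrL(\bar\kappa)=\bar\alpha$, and that in $X$ the triple $(\bar\alpha,\bar\lambda,\bar M)$ is a counterexample for $\bar\kappa$. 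Exactly as in the proof of Theorem \ref{theorem:Equivalent}, a surjection $\bar\kappa\to\bar M$ lying in $X$ gives $\bar M\subseteq X$, so $i:=j\restriction\bar M$ is an elementary embedding of $\bar M$ into $M^*=j(\bar M)$ with $\crit{i}=\bar\kappa$, $i(\bar\kappa)=\kappa>\bar\lambda$ and $\VV_{\bar\lambda}\subseteq\VV_\kappa\subseteq M^*$ (using that $M^*$, being a $\ZF^-$-model with $\kappa\in M^*$, ${}^{{<}\kappa}M^*\subseteq M^*$ and $\kappa$ inaccessible, contains $\VV_\kappa$). Since $i(\bar f)(\bar\kappa)=(\scrL\restriction\kappa)(\bar\kappa)=\scrL(\bar\kappa)=\bar\alpha$, the embedding $i$ witnesses that $\bar\alpha$ \emph{is} anticipated at threshold $\bar\lambda$ for $\bar M$; by the bounding lemma this is already witnessed in $\HH{\bar\theta}$, hence in $X$, contradicting that $(\bar\alpha,\bar\lambda,\bar M)$ is a counterexample in $X$. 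This contradiction shows anticipation never fails, proving (ii).

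\textbf{Main obstacle.} The delicate point is the bounding/absoluteness step: one must ensure that anticipation is witnessed by a target of bounded size, so that ``active'' is genuinely $\Sigma_2$ and each sufficiently closed $\HH{\theta}$ computes the recursion for $\scrL$ correctly, while simultaneously checking that the transitive collapse preserves the critical point, the inequality $i(\delta)\geq\lambda$, and above all the recorded Laver value $i(f)(\delta)=\alpha$. Making these invariants survive the collapse, and verifying that the single formula defining $\scrL$ is absolute enough for the elementarity of $j$ to transport the counterexample data faithfully between $X$, $\HH{\bar\theta}$, $\HH{\theta}$ and $\VV$, is where the argument must be executed with care; the remaining structure simply follows the template of the proof of Theorem \ref{theorem:Equivalent}.
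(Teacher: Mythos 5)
Your overall strategy is the same as the paper's (recursive diagonalization against counterexamples, a bounding lemma to make activity $\Sigma_2$, shrewd embeddings from Lemma \ref{lemma:ShrewdEmbeddings}, transfer of the counterexample down to $\bar{\kappa}$, and the final contradiction via $j\restriction\bar{M}$), but there is a genuine gap, and it sits exactly at the step you flag as delicate: the claim that $\scrL(\bar{\kappa})=\bar{\alpha}$. The problem is your selection rule. You record, at an active stage, the least \emph{value} occurring at the least \emph{threshold}. Counterexamples are upward closed in the threshold but not downward closed: an anticipating embedding for threshold $\lambda'$ also anticipates for any $\lambda\leq\lambda'$, so non-anticipation passes upward, not downward. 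Consequently, from the fact (your Claim-2 analogue) that $(\bar{\alpha},\bar{\lambda})$ is a counterexample pair at $\bar{\kappa}$ you only get $\lambda_{\bar{\kappa}}\leq\bar{\lambda}<\bar{\theta}$; the set of counterexample \emph{values} at the possibly strictly smaller threshold $\lambda_{\bar{\kappa}}$ is in general a proper subset of the values at $\bar{\lambda}$, and its minimum $\alpha_{\bar{\kappa}}=\scrL(\bar{\kappa})$ has no bound below $\bar{\theta}$ (only below $\kappa$, via property (i)). So $\HH{\bar{\theta}}$ need not compute the recursion at stage $\bar{\kappa}$ correctly: determining $\lambda_{\bar{\kappa}}$ requires verifying, for each $\lambda'<\lambda_{\bar{\kappa}}$, that \emph{no} ordinal $\alpha'$ whatsoever yields a counterexample, and witnessing $\lambda_{\bar{\kappa}}$ itself requires some counterexample value at that threshold to lie below $\bar{\theta}$ --- neither is available. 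Elementarity of $j$ only gives you that $X$'s \emph{internal} computation of $\scrL(\bar{\kappa})$ equals $\bar{\alpha}$; but the final step needs the \emph{true} value, since $i(\bar{f})(\bar{\kappa})=({\scrL\restriction\kappa})(\bar{\kappa})=\scrL(\bar{\kappa})$ is computed in $\VV$. If $\scrL(\bar{\kappa})\in[\bar{\theta},\kappa)$, your exhibited embedding anticipates the wrong value and no contradiction results.

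This is precisely why the paper orders counterexample pairs by the canonical (G\"odel) well-ordering $\lhd$, which compares pairs by their maximum first. Then the $\lhd$-least counterexample pair $\langle\beta,\eta\rangle$ at $\bar{\kappa}$ satisfies $\beta,\eta\leq\max(\bar{\alpha},\bar{\lambda})<\bar{\theta}$, and \emph{every} pair $\lhd$-below it is bounded by the same ordinal; hence $\HH{\bar{\theta}}$ correctly identifies $\langle\beta,\eta\rangle$ (downward absoluteness of non-anticipation plus $\Sigma_1$-absoluteness of anticipation), so $\beta,\eta$ are definable in $\HH{\bar{\theta}}$ from parameters in $X$, lie in $X$, and elementarity plus the correctness of $\HH{\theta}$ forces $j(\beta)=\alpha$, i.e.\ $\scrL(\bar{\kappa})=\bar{\alpha}$. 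The same defect already surfaces in your proof of (i): a single $\Sigma_2$-reflection bounds \emph{some} counterexample pair below $\kappa$, which bounds $\lambda_\delta$ but not the least value at that threshold; there it can be repaired by a second reflection with $\lambda_\delta$ as a parameter, but at stage $\bar{\kappa}$ in part (ii) no such repair is available because $\bar{\theta}$ is handed to you by Lemma \ref{lemma:ShrewdEmbeddings} and cannot be chosen above $\scrL(\bar{\kappa})$. Replacing your lexicographic rule by the $\lhd$-least pair (and recording its value coordinate) restores the argument and essentially reproduces the paper's proof.
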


\begin{proof}
 Let $\lhd$ denote the canonical well-ordering of $\On\times\On$, i.e. we have
 \begin{equation*}
  \begin{split}
   \langle\alpha,\lambda\rangle\lhd\langle\beta,\eta\rangle ~ \Longleftrightarrow ~ & \max(\alpha,\lambda)<\max(\beta,\eta) \\
   &  \vee ~ (\max(\alpha,\lambda)=\max(\beta,\eta) ~ \wedge ~ \alpha<\beta) \\
   &  ~\vee ~ (\max(\alpha,\lambda)=\max(\beta,\eta) ~ \wedge ~ \alpha=\beta ~ \wedge ~ \lambda<\eta)
  \end{split}
 \end{equation*}
 for all $\alpha,\beta,\lambda,\eta\in\On$. 
 We construct the function $\map{\scrL}{\On}{\On}$ by recursion. Assume that we already constructed $\map{\scrL\restriction\gamma}{\gamma}{\On}$ for some ordinal $\gamma$. 
  If $\gamma$ is an inaccessible cardinal and there exists a pair $\langle\beta,\eta\rangle$ of ordinals with the property that there is a transitive $\ZF^-$-model $M$ of cardinality $\gamma$ with ${\scrL\restriction\gamma}\in M$ and ${}^{{<}\gamma}M\subseteq M$ such that  for every transitive set $N$ with $\VV_\eta\subseteq N$, there is no elementary embedding $\map{j}{M}{N}$ with $\crit{j}=\gamma$, $j(\gamma)\geq\eta$ and $j({\scrL\restriction\gamma})(\gamma)=\beta$, then we define $\scrL(\gamma)$ to be the first component of the $\lhd$-least such pair of ordinals.  
  In the other case, if the above assumption fails, then we define $\scrL(\gamma)=0$. 
  This completes the construction of $\scrL$.

  \begin{claim*}
   If $\kappa$ is a $\Sigma_2$-reflecting cardinal, then $\mathscr{L}[\kappa]\subseteq\kappa$. 
  \end{claim*}
  
  \begin{proof}[Proof of the Claim]
   Assume, towards a contradiction, that there exists an ordinal $\gamma<\kappa$ with $\scrL(\gamma)\geq\kappa$. Let $\gamma$ be the minimal ordinal with this property. 
  Then $\gamma$ is an inaccessible cardinal and the function ${\scrL\restriction\gamma}$ is an element of $\VV_\kappa$. 
  Moreover, there exists a pair $\langle\beta,\eta\rangle$ of ordinals with the property that there is a transitive $\ZF^-$-model $M$ of cardinality $\gamma$ with ${\scrL\restriction\gamma}\in M$ and ${}^{{<}\gamma}M\subseteq M$ such that  for every transitive set $N$ with $\VV_\eta\subseteq N$, there is no elementary embedding $\map{j}{M}{N}$ with $\crit{j}=\gamma$, $j(\gamma)\geq\eta$ and $j({\scrL\restriction\gamma})(\gamma)=\beta$. 
  Since the existence of such a pair of ordinals can be expressed by a $\Sigma_2$-formula with parameter ${\scrL\restriction\gamma}$, the fact that $\kappa$ is a $\Sigma_2$-reflecting cardinal allows us to find a pair $\langle\beta_0,\eta_0\rangle$ of ordinals less than $\kappa$ that witnesses the validity of the above statement. 
  But now the definition of $\scrL$ ensures that $\scrL(\gamma)\leq\max(\beta_0,\lambda_0)<\kappa\leq\scrL(\gamma)$, a contradiction.  
  \end{proof}

  Now, assume, towards a contradiction, that   $\kappa$ is a strongly unfoldable cardinal, $\alpha,\lambda\in\On$ and $M$ is a transitive $\ZF^-$-model of cardinality $\kappa$ with ${\scrL\restriction\kappa}\in M$ and ${}^{{<}\kappa}M\subseteq M$ such that for every transitive set $N$ with $\VV_\lambda\subseteq N$, there is no  elementary embedding $\map{k}{M}{N}$ with $\crit{k}=\kappa$, $k(\kappa)\geq\lambda$ and $k({\scrL\restriction\kappa})(\kappa)=\alpha$.   
  Let $\langle\alpha,\lambda\rangle$ denote the $\lhd$-least  pair of ordinals with this property and let $M$ be a transitive $\ZF^-$-model witnessing this. 
 Pick a cardinal $\theta>\max\{\alpha,\kappa,\lambda\}$ that is a limit of fixed points of the $\beth$-function. 
 %
 By Lemma \ref{lemma:ShrewdEmbeddings}, there exist cardinals $\bar{\kappa}<\bar{\theta}<\kappa$, an elementary submodel $X$ of $\HH{\bar{\theta}}$ with $(\bar{\kappa}+1)\cup{}^{{<}\bar{\kappa}}X\subseteq X$  and an elementary embedding $\map{j}{X}{\HH{\theta}}$ with $j\restriction\bar{\kappa}=\id_{\bar{\kappa}}$, $j(\bar{\kappa})=\kappa$ and $\alpha,\lambda,M,\scrL\restriction\kappa\in\ran{j}$. 
 Then elementarity implies that $\bar{\theta}$ is a  limit of fixed points of the $\beth$-function and 
 this guarantees that $\VV_\xi\in\HH{\bar{\theta}}$ for all $\xi<\bar{\theta}$. 
  Pick $\bar{\alpha},\bar{\lambda},\bar{M}\in X$ with $j(\bar{\alpha})=\alpha$, $j(\bar{\lambda})=\lambda$ and $j(\bar{M})=M$. 
  Since $\bar{\theta}$ is a  limit of fixed points of the $\beth$-function, the elementarity of $j$ and the fact that ${}^{{<}\bar{\kappa}}\bar{M}\subseteq\HH{\bar{\theta}}$ ensure that $\bar{M}$ is a transitive $\ZF^-$-model of cardinality $\bar{\kappa}$ with ${\scrL\restriction\bar{\kappa}}\in\bar{M}$ and ${}^{{<}\bar{\kappa}}\bar{M}\subseteq\bar{M}\subseteq X$. 
  Moreover, we know that $\bar{\kappa}$ is inaccessible and $j(\scrL\restriction\bar{\kappa})=\scrL\restriction\kappa$.

  \begin{claim*}
   For every transitive set $N$ with $\VV_{\bar{\lambda}}\subseteq N$, there is no elementary embedding $\map{i}{\bar{M}}{N}$ with $\crit{i}=\bar{\kappa}$, $i(\bar{\kappa})\geq\bar{\lambda}$ and $i({\scrL\restriction\bar{\kappa}})(\bar{\kappa})=\bar{\alpha}$. 
  \end{claim*}
 
  \begin{proof}[Proof of the Claim]
   Assume, towards a contradiction, that there exists such a transitive set and such an embedding. 
   Then the existence of these objects can be formulated by a $\Sigma_1$-formula with parameters $\bar{\alpha}$, $\VV_{\bar{\lambda}}$ and ${\scrL\restriction\bar{\kappa}}$, and all of these parameters are elements of $X$.  
   Since the $\Sigma_1$-Reflection Principle and the elementarity of $X$ in $\HH{\bar{\theta}}$  ensure that $X$ is a $\Sigma_1$-elementary submodel of $\VV$, we can find sets $N$ and $i$ in $X$ that possess the listed properties in $X$. 
   But then the elementarity of $j$ ensures that $j(N)$ is a transitive set with $\VV_\lambda\subseteq j(N)$ and $\map{k=j(i)}{M}{j(N)}$ is an  elementary embedding  with $\crit{k}=\kappa$, $k(\kappa)\geq\lambda$ and $k(\scrL\restriction\kappa)(\kappa)=\alpha$, contradicting our assumptions on $\alpha$, $\lambda$ and $M$. 
  \end{proof}

 \begin{claim*}
  $\scrL(\bar{\kappa})=\bar{\alpha}$. 
 \end{claim*}
 
  \begin{proof}[Proof of the Claim]
   %
   By the previous claim, there is a pair $\langle\beta,\eta\rangle$ of ordinals with the property that there exists a transitive $\ZF^-$-model $M^\prime$ of cardinality $\bar{\kappa}$ with ${\scrL\restriction\bar{\kappa}}\in M^\prime$ and ${}^{{<}\bar{\kappa}}M^\prime\subseteq M^\prime$ such that for every transitive set $N$ with $\VV_\eta\subseteq N$, there is no elementary embedding $\map{i}{M^\prime}{N}$ satisfying $\crit{i}=\bar{\kappa}$, $i(\bar{\kappa})\geq\eta$ and $i({\scrL\restriction\bar{\kappa}})(\bar{\kappa})=\beta$. 
   Let $\langle\beta,\eta\rangle$ denote the $\lhd$-least such pair. 
   Then $\scrL(\bar{\kappa})=\beta$ and the previous claim shows that either $\langle\beta,\eta\rangle\lhd\langle\bar{\alpha},\bar{\lambda}\rangle$ or $\langle\beta,\eta\rangle=\langle\bar{\alpha},\bar{\lambda}\rangle$. In particular, it follows that  $\beta,\eta\leq\max(\bar{\alpha},\bar{\lambda})<\bar{\theta}$.  

   Using the absoluteness of $\Sigma_1$-formulas between $\HH{\bar{\theta}}$ and $\VV$ together with the fact that ${}^{{<}\bar{\kappa}}\bar{\kappa}\in\HH{\bar{\kappa}^+}\in\HH{\bar{\theta}}$ and $\VV_\xi\in\HH{\bar{\theta}}$ for all $\xi<\bar{\theta}$, we now know that, in $\HH{\bar{\theta}}$, the pair $\langle\beta,\eta\rangle$ is $\lhd$-least pair of ordinals with the property that there exists  a transitive $\ZF^-$-model $M^\prime$ of cardinality $\bar{\kappa}$  such that  ${\scrL\restriction\bar{\kappa}}\in M^\prime$,  ${}^{{<}\bar{\kappa}}M^\prime\subseteq M^\prime$  and for every transitive set $N$ with $\VV_\eta\subseteq N$, there is no elementary embedding $\map{i}{M^\prime}{N}$ satisfying $\crit{i}=\bar{\kappa}$, $i(\bar{\kappa})\geq\eta$ and $i({\scrL\restriction\bar{\kappa}})(\bar{\kappa})=\beta$.  
   This shows that, in $\HH{\bar{\theta}}$, the pair $\langle\beta,\eta\rangle$ can be defined by a formula that only uses parameters from $X$ and therefore we know that $\beta$ and $\eta$ are both elements of $X$.  
  But then the elementarity of $j$ implies that, in $\HH{\theta}$,  the pair $\langle j(\beta),j(\eta)\rangle$ is $\lhd$-minimal with the property that there exists  a transitive $\ZF^-$-model $M^\prime$ of cardinality $\kappa$  such that ${\scrL\restriction\kappa}\in M^\prime$, ${}^{{<}\kappa}M^\prime\subseteq M^\prime$ and for every transitive set $N$ with $\VV_{j(\eta)}\subseteq N$, there is no elementary embedding $\map{i}{M^\prime}{N}$ with $\crit{i}=\kappa$, $i(\kappa)\geq j(\eta)$ and $i({\scrL\restriction\kappa})(\kappa)=j(\beta)$. 
   In this situation, our choice of $\theta$ ensures that the given statement also holds in $\VV$. Therefore the minimality of the pairs $\langle\alpha,\lambda\rangle$ and $\langle j(\beta),j(\eta)\rangle$ implies that $j(\beta)=\alpha=j(\bar{\alpha})$ and  we can conclude that $\bar{\alpha}=\beta=\scrL(\bar{\kappa})$. 
 \end{proof}

 By the above claim, we have $j({\scrL\restriction\bar{\kappa}})(\bar{\kappa})=({\scrL\restriction\kappa})(\bar{\kappa})=\bar{\alpha}$ and this shows that the set $M$ and the embedding $\map{j\restriction\bar{M}}{\bar{M}}{M}$ witness that there exists a transitive set $N$ with $\VV_{\bar{\lambda}}\subseteq N$ and an elementary embedding $\map{k}{\bar{M}}{N}$ with $\crit{k}=\bar{\kappa}$, $k(\bar{\kappa})\geq\bar{\lambda}$ and $k({\scrL\restriction\bar{\kappa}})(\bar{\kappa})=\bar{\alpha}$. 
 Since this statement can again be formulated by a $\Sigma_1$-formula using the sets $\bar{\alpha}$, ${\scrL\restriction\bar{\kappa}}$, $\bar{M}$ and $\VV_{\bar{\lambda}}$ as parameters, it  holds in $X$ and therefore the elementarity of $j$ implies that there is a transitive set $N$ with $\VV_\lambda\subseteq N$ and an elementary embedding $\map{k}{M}{N}$ with $\crit{k}=\kappa$, $k(\kappa)\geq\lambda$ and $k({\scrL\restriction\kappa})(\kappa)=\alpha$, contradicting our initial assumption. 
\end{proof}

We now use the above result to show that the assumption that $\VV=\HOD_z$ holds for some subset $z$ of $\kappa$ implies the existence of \emph{Laver functions} for strongly unfoldable cardinals $\kappa$.

\begin{theorem}\label{theorem:LaverHODStrUnfold}
 Let $\kappa$ be a strongly unfoldable cardinal and let $z$ be a subset of $\kappa$.  
 Then there exists a function $\map{\ell_z}{\kappa}{\VV_\kappa}$ such that for every $A\in\HOD_z$, all $\lambda\in\On$ and every transitive $\ZF^-$-model $M$ of cardinality $\kappa$ with $\ell_z\in M$ and ${}^{{<}\kappa}M\subseteq M$, there exists a transitive set $N$ with $\VV_\lambda\subseteq N$ and an elementary embedding $\map{j}{M}{N}$ with $\crit{j}=\kappa$, $j(\kappa)\geq\lambda$ and $j(\ell_z)(\kappa)=A$.  
\end{theorem}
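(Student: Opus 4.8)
The plan is to reduce this to Theorem \ref{theorem:OrdinalAntiLaver} by coding elements of $\HOD_z$ as ordinals via the canonical well-ordering of $\HOD_z$ and then letting $\ell_z$ decode the values of the ordinal anticipating function $\scrL$ supplied by that theorem. Fix the canonical well-ordering $<_z$ of $\HOD_z$ that first compares, for two sets, the least ordinals $\eta$ for which they are definable over $\VV_\eta$ from $z\cap\VV_\eta$ together with finitely many ordinal parameters, breaking ties by least G\"odel codes of such definitions. This well-ordering has order type $\On$ and is definable by a fixed $\Sigma_2$-formula with parameter $z$; let $\map{e_z}{\On}{\HOD_z}$ be the induced enumeration and, for $A\in\HOD_z$, let $a$ be the unique ordinal with $e_z(a)=A$. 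Since $\kappa$ is inaccessible and every ordinal below $\kappa$ has definability level below $\kappa$, the first $\kappa$ elements of $<_{z\cap\gamma}$ all have definability level below $\kappa$ and hence lie in $\VV_\kappa$; as $\kappa$ is $\Sigma_2$-reflecting, Theorem \ref{theorem:OrdinalAntiLaver}.(i) gives $\scrL(\gamma)<\kappa$ for all $\gamma<\kappa$, so the value $e_{z\cap\gamma}(\scrL(\gamma))$ always lands in $\VV_\kappa$.

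I would then define $\map{\ell_z}{\kappa}{\VV_\kappa}$ by $\ell_z(\gamma)=e_{z\cap\gamma}(\scrL(\gamma))$. The first observation is that any $M$ as in the statement satisfies $\VV_\kappa\subseteq M$ (a consequence of $\kappa$ being inaccessible together with $|M|=\kappa$, $\kappa\in M$ and ${}^{{<}\kappa}M\subseteq M$, decoding each element of $\HH\kappa=\VV_\kappa$ from a $<\kappa$-sequence inside $M$). Because the first $\kappa$ elements of $<_{z\cap\gamma}$ are determined by definability over the sets $\VV_\eta$ with $\eta<\kappa$, all of which belong to $M$, the model $M$ computes $\gamma\mapsto\ell_z(\gamma)$ correctly from the parameters $z$ and $\scrL\restriction\kappa$, and $\scrL(\gamma)$ is in turn recoverable inside $M$ as the $<_{z\cap\gamma}$-rank of $\ell_z(\gamma)$; in particular $\scrL\restriction\kappa\in M$, so Theorem \ref{theorem:OrdinalAntiLaver}.(ii) applies. (Ensuring that the parameter $z$ itself lies in $M$ is a genuine point: since the value at $\kappa$ is required to be exactly $A$, I cannot simply pair $z$ into $\ell_z$, so I would either treat $z\in M$ as an auxiliary hypothesis removed by a coding argument, or—more in line with Lemma \ref{lemma:ShrewdEmbeddings}—re-derive the conclusion by the Magidor-style embedding method directly, placing $z$, $M$, $A$ and $\lambda$ in the range of the embedding so that $z\cap\bar\kappa$ is available in the correct ambient model.) Given $A$, $\lambda$ and $M$, I would pick $\lambda'\geq\lambda$ large enough that the definability level of $A$ lies below $\lambda'$, apply Theorem \ref{theorem:OrdinalAntiLaver}.(ii) with the ordinal $a$ and $\lambda'$ to obtain $N\supseteq\VV_{\lambda'}$ and $\map{j}{M}{N}$ with $\crit{j}=\kappa$, $j(\kappa)\geq\lambda'$ and $j(\scrL\restriction\kappa)(\kappa)=a$, and then, using $j(z)\cap\kappa=z$ and transferring the defining property of $\ell_z$ through $j$, conclude that $j(\ell_z)(\kappa)$ is the $a$-th element of $\HOD_z$ \emph{as computed inside} $N$.

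The main obstacle is precisely this last step: as $\HOD$ is not absolute between transitive models, I must show that $N$ decodes $a$ to the genuine set $A=e_z(a)$ rather than to some $N$-internal impostor, even though now $a$ may exceed $\kappa$ and the witnessing levels are correspondingly large. Here I would exploit that $N$ is transitive with $\VV_{\lambda'}\subseteq N$, so $N$ and $\VV$ share the same $\VV_\eta$ for every $\eta<\lambda'$, together with the choice of $\lambda'$: every $<_z$-predecessor of $A$, and $A$ itself, has definability level below $\lambda'$, so the entire initial segment of $<_z$ up to and including $A$ is pinned down by the common levels $\VV_\eta$ with $\eta<\lambda'$. The delicate verification is that $N$ neither omits a true predecessor nor inserts a spurious one below level $\lambda'$, i.e.\ that the level-stratified ordering $<_z$ is computed identically in $N$ and $\VV$ on this initial segment; this is the analogue, one quantifier higher, of the $\Sigma_1$-correctness arguments already carried out in Lemma \ref{lemma:ShrewdEmbeddings} and Theorem \ref{theorem:OrdinalAntiLaver}, now applied to the $\Sigma_2$-definition of $<_z$ and supported by the same $\beth$-fixed-point choices that guarantee the relevant $\VV_\eta$ are correctly seen. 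Granting this, $N$ and $\VV$ agree on the enumeration up to $a$, so $j(\ell_z)(\kappa)=e_z(a)=A$; since $\VV_\lambda\subseteq\VV_{\lambda'}\subseteq N$ and $j(\kappa)\geq\lambda'\geq\lambda$, the pair $N$ and $j$ is the required witness, completing the reduction.
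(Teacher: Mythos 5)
Your construction of $\ell_z$ and your final decoding argument essentially match the paper's: there, too, $\ell_z(\gamma)$ is the $\scrL(\gamma)$-th element of the canonical well-ordering of $\HOD_{z\cap\gamma}$ (the paper localizes this to $\VV_\kappa$, which agrees with your formulation by $\Sigma_2$-reflection), and there, too, one verifies that the target model decodes the ordinal correctly to $A$ by arranging that it contains a sufficiently correct rank-initial segment of $\VV$ (the paper fixes $\rho>\max\{\alpha,\lambda\}$ with $\VV_\rho\prec_{\Sigma_2}\VV$ and demands $\VV_\rho\subseteq N^\prime$). So the second half of your plan is sound in outline.

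The genuine gap is the one you flag yourself and do not close: Theorem \ref{theorem:OrdinalAntiLaver}.(ii) requires ${\scrL\restriction\kappa}\in M$, while the theorem to be proved only supplies $\ell_z\in M$. Your recovery of ${\scrL\restriction\kappa}$ inside $M$ as the $<_{z\cap\gamma}$-rank of $\ell_z(\gamma)$ needs the parameter $z$ (i.e.\ the function $\gamma\mapsto z\cap\gamma$), and $z$ is neither an element of an arbitrary such $M$ nor recoverable from $\ell_z$: whenever $\scrL(\gamma)$ is small (say $\scrL(\gamma)=0$), the value $\ell_z(\gamma)$ is the same for every parameter, so distinct $z$ can yield identical functions $\ell_z$. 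Of your two escape routes, the first (``a coding argument'') is not exhibited, and the second (``re-derive by the Magidor-style method'') abandons the reduction and proposes a different, unwritten proof. The paper resolves exactly this point with a device missing from your proposal: it never applies Theorem \ref{theorem:OrdinalAntiLaver}.(ii) to $M$ itself. Instead, it chooses a ${<}\kappa$-closed elementary submodel $M^\prime$ of $\HH{\kappa^+}$ of cardinality $\kappa$ having $M$ and ${\scrL\restriction\kappa}$ (and $z$) among its elements, applies the ordinal-anticipating theorem to $M^\prime$ with the ordinals $\alpha$ and $\rho$ to get $\map{j}{M^\prime}{N^\prime}$ with $\VV_\rho\subseteq N^\prime$ and $j({\scrL\restriction\kappa})(\kappa)=\alpha$, checks $j(\ell_z)(\kappa)=A$ inside $N^\prime$, and only then restricts: the transitive set $j(M)$ and the embedding $\map{j\restriction M}{M}{j(M)}$ witness the conclusion, since $\VV_\lambda\subseteq\VV_\rho\subseteq j(M)$ and elementarity passes to the restriction. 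This ``enlarge, embed, restrict'' step is precisely what allows the hypothesis on $M$ to remain as weak as $\ell_z\in M$, and without it (or a worked-out substitute) your argument does not prove the stated theorem.
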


\begin{proof}
 First, note that there exists a $\Sigma_2$-formula $\varphi(v_0,v_1)$ with the property that $\ZFC$ proves  that for every set $y$, the class $\Set{B}{\varphi(B,y)}$ consists of all proper initial segments of the canonical well-ordering of $\HOD_y$ (see the proof of {\cite[Lemma 13.25]{MR1940513}} for details). 
 In particular, we can find a $\Sigma_2$-formula $\psi(v_0,v_1,v_2)$ such that $\ZFC$ proves that $\psi(x,y,\alpha)$ holds if and only if $\alpha$ is an ordinal and $x$ is the $\alpha$-th element of the canonical well-ordering of $\HOD_y$. 
 Since  strongly unfoldable cardinals are $\Sigma_2$-reflecting, it  follows that for all $\gamma<\kappa$, the set  $\HOD_{z\cap\gamma}^{\VV_\kappa}$ is an initial segment  of the canonical well-ordering of $\HOD_{z\cap\gamma}$ of order-type $\kappa$.   
 Let $\map{\scrL}{\On}{\On}$ be the function given by Theorem \ref{theorem:OrdinalAntiLaver} and let $\map{\ell_z}{\kappa}{\VV_\kappa}$ denote the unique function with the property that for all $\gamma<\kappa$, the set $\ell_z(\gamma)$ is the $\scrL(\gamma)$-th element of the canonical well-ordering of $\HOD_{z\cap\gamma}^{\VV_\kappa}$.

 Now, fix $A\in\HOD_z$, $\lambda\in\On$ and a transitive $\ZF^-$-model $M$ of cardinality $\kappa$ with $\ell_z\in M$ and ${}^{{<}\kappa}M\subseteq M$. 
 Let $\alpha$ denote the rank of $A$ in the canonical well-ordering of $\HOD_z$. 
    Pick a  cardinal $\rho>\max\{\alpha,\lambda\}$ with $\VV_\rho\prec_{\Sigma_2}\VV$ and an elementary submodel $M^\prime$ of $\HH{\kappa^+}$ of cardinality $\kappa$ satisfying $\kappa\cup\{M,{\scrL\restriction\kappa}\}\cup{}^{{<}\kappa}M^\prime\subseteq M^\prime$. 
   Our setup now ensures that there exists a transitive set $N^\prime$ with $\VV_\rho\subseteq N^\prime$ and an elementary embedding  $\map{j}{M^\prime}{N^\prime}$ with $\crit{j}=\kappa$, $j(\kappa)\geq\rho$ and $j({\scrL\restriction\kappa})(\kappa)=\alpha$.  
  Then, in $M^\prime$, the function $\ell_z$ has the property that for every $\gamma<\kappa$, the set $\ell_z(\gamma)$ is the $({\scrL\restriction\kappa})(\gamma)$-th element in the canonical well-ordering of $\HOD_{z\cap\gamma}^{\VV_\kappa}$. 
  Since $j({\scrL\restriction\kappa})(\kappa)=\alpha$ and $j(z)\cap\kappa=z$, this shows that, in $N^\prime$, the set $j(\ell_z)(\kappa)$ is the $\alpha$-th element in the canonical well-ordering of  $\HOD_z^{\VV_{j(\kappa)}}$. 
  Moreover, since the $\Sigma_2$-correctness of $\VV_\rho$ in $\VV$ implies that $\HH{\rho}=\VV_\rho$, we can use elementarity to show that, in $N^\prime$, all $\Sigma_1$-statements are absolute between $\VV_\rho$ and $\VV_{j(\kappa)}$. 
   In particular, all  $\Sigma_2$-statements are upwards absolute from $\VV_\rho$ to $\VV_{j(\kappa)}^{N^\prime}$. Set $I=\HOD_z\cap\VV_\rho$. 
    Since $\VV_{j(\kappa)}^{N^\prime}$ is a model of $\ZFC$ and $\VV_\rho\prec_{\Sigma_2}\VV$, we can now conclude that the set $I$ is equal to the  initial segment of order-type  $\rho$ of the canonical well-ordering of  $\HOD_z^{\VV_{j(\kappa)}}$ in  $N^\prime$ . 
    Moreover, the induced well-orderings of $I$ in $\VV$ and $\VV_{j(\kappa)}^{N^\prime}$ agree. 
    In particular, the fact that $\alpha<\rho$ implies that $j(\ell_z)(\kappa)$ is the $\alpha$-th element in the canonical well-ordering of  $\HOD_z$ and therefore $j(\ell_z)(\kappa)=A$. 
  Since  $\VV_\kappa\in M$ and $\VV_\rho\subseteq N^\prime$, we know that $\VV_\lambda\subseteq j(M)$ and therefore the  set $j(M)$ and the map $\map{j\restriction M}{M}{j(M)}$ witness that there exists a transitive set $N$ with $\VV_\lambda\subseteq N$ and an elementary embedding $\map{k}{M}{N}$ with $\crit{k}=\kappa$, $k(\kappa)\geq\lambda$ and $k(\ell_z)(\kappa)=A$.  
%
  %
\end{proof}

 Note that, given a set $z$, the assumptions that $\VV=\HOD_z$ holds  is equivalent to the existence of a well-ordering of $\VV$ of order-type $\On$ that is definable by a formula with parameter $z$. 
  This shows that the above result strengthens {\cite[Theorem 25]{Hamkins:LaverDiamond}} by removing all absoluteness requirements on the used well-ordering of $\VV$ and allowing unbounded subsets of the given strongly unfoldable cardinal as parameters.

 We end this section with a standard argument that shows how  $\Diamond_\kappa(\mathrm{Reg})$-sequences  can be constructed from the Laver functions given by the previous result.

\begin{proof}[Proof of Theorem \ref{theorem:HODdiamonds}]
 Assume that $\kappa$ is a strongly unfoldable cardinal with the property that $\POT{\kappa}\subseteq\HOD_z$ holds for some $z\subseteq\kappa$. 
 Let $\map{\ell_z}{\kappa}{\VV_\kappa}$ denote the function given by Theorem  \ref{theorem:LaverHODStrUnfold} and let $\seq{A_\alpha}{\alpha<\kappa}$ be a sequence with the property that $A_\alpha=\ell_z(\alpha)$ holds for all  $\alpha<\kappa$ with $\ell_z(\alpha)\subseteq\alpha$. 
 Fix a subset $A$ of $\kappa$ and a closed unbounded subset $C$ of $\kappa$. 
 Pick an elementary submodel $M$ of $\HH{\kappa^+}$ of cardinality $\kappa$ with $\kappa\cup\{\ell_z,A,C\}\cup{}^{{<}\kappa}M\subseteq M$. 
 By Theorem \ref{theorem:LaverHODStrUnfold}, there exists a transitive set $N$ and an elementary embedding $\map{j}{M}{N}$ with $\crit{j}=\kappa$ and $j(\ell_z)(\kappa)=A$. 
 Since we now have $\kappa\in j(C)$, $j(\ell_z)(\kappa)=A\subseteq\kappa$ and $j(\ell_z)(\kappa)=A=j(A)\cap\kappa$, elementarity implies that that there exists a regular cardinal $\alpha$ in $C$ with  $\ell_z(\alpha)\subseteq\alpha$ and $A\cap\alpha=\ell_z(\alpha)$.  
 But this allows us to conclude that there exists a regular cardinal $\alpha$ in $C$ with $A_\alpha=A\cap\alpha$. 
\end{proof}


\section{Strong chain conditions}\label{section:FilterLayered}

We now present another application of the results of Section \ref{section:equiv} that deals with strengthenings of the $\kappa$-chain condition of partial orders. Our starting point is the following strong chain condition introduced by Cox in \cite{MR3911105}:

\begin{definition} 
Let $\PPP$ be a partial order and let $\kappa$ be an uncountable regular cardinal. 
 \begin{enumerate}
  \item A suborder $\QQQ$ of  $\PPP$ is  \emph{regular} if the inclusion map preserves incompatibility and maximal antichains in $\QQQ$ are maximal in $\PPP$. 
  
  \item (Cox) The partial order $\PPP$ is \emph{$\kappa$-stationarily layered} if the collection $\mathrm{Reg}_\kappa(\PPP)$ of regular suborders of $\PPP$ of size less than $\kappa$ is stationary\footnote{Here, we use Jech’s notion of stationarity in $\Poti{A}{\kappa}$ (see {\cite[Section 4.1]{MR2768680}}), {i.e.} a subset $S$ of $\Poti{A}{\kappa}$ is \emph{stationary in $\Poti{A}{\kappa}$} if it meets every subset of $\Poti{A}{\kappa}$ which is  $\subseteq$-continuous and cofinal in $\Poti{A}{\kappa}$.} in $\Poti{\PPP}{\kappa}$. 
 \end{enumerate}
\end{definition}

A result of Cox showed that every $\kappa$-stationarily layered partial order $\PPP$ has the $\kappa$-Knaster property (see {\cite[Lemma 4]{MR3911105}), {i.e.} every subset of $\PPP$ of size $\kappa$ contains a subset of size $\kappa$ that consists of pairwise compatible conditions. 
 In particular, $\kappa$-stationary layeredness implies the $\kappa$-chain condition. 
 The main result of \cite{MR3620068} now shows that an uncountable regular  $\kappa$ is weakly compact if and only if the $\kappa$-chain condition implies $\kappa$-stationary layeredness.

 The following strengthening of stationary layeredness, introduced in the proof of the main result of \cite{MR3620068},  suggests the possibility of characterizing large cardinal properties stronger than weak compactness through the equivalence of the $\kappa$-chain condition to some other combinatorial property of partial orders:

  \begin{definition}[\cite{MR3620068}]
  Given an uncountable regular cardinal $\kappa$, a cardinal $\lambda\geq\kappa$ and $\calF\subseteq\POT{\Poti{\lambda}{\kappa}}$,  a partial order $\PPP$ of cardinality at most $\lambda$ is \emph{$\calF$-layered} if $$\Set{a\in\Poti{\lambda}{\kappa}}{s[a]\in\mathrm{Reg}_\kappa(\PPP)}\in\calF$$ holds for every surjection $\map{s}{\lambda}{\PPP}$. 
 \end{definition}

 If $\calF$ is a normal filter on $\Poti{\lambda}{\kappa}$, then {\cite[Lemma 3.3]{MR3620068}} shows that every $\calF$-layered partial order is $\kappa$-stationarily layered. 
 Moreover, if $\kappa$ is weakly compact and $\calF_{wc}$ is the normal filter on $\Poti{\kappa}{\kappa}$ induced by the \emph{weakly compact filter} on $\kappa$ (see \cite{MR0281606}), then {\cite[Lemma 4.1]{MR3620068}} shows that every partial order of cardinality at most $\kappa$ that satisfies the $\kappa$-chain condition is $\calF_{wc}$-layered. 
  Finally, {\cite[Lemma 4.3]{MR3620068}} tells us that if $\kappa$ is $\lambda$-supercompact and $\calU$ is a normal ultrafilter on $\Poti{\lambda}{\kappa}$, then every partial order of cardinality at most $\lambda$ satisfying the $\kappa$-chain condition is $\calU$-layered.

 By considering filters with strong closure properties, the notion of filter layeredness provides natural examples of strong chain conditions that are highly productive. Given infinite cardinals $\nu<\kappa\leq\lambda$ and a function $\map{f}{{}^\nu\lambda}{\lambda}$, we define $$\mathrm{Cl}_\kappa(f) ~ = ~ \Set{a\in\Poti{\lambda}{\kappa}}{f[{}^\nu a]\subseteq a}.$$
  If we now assume that $\kappa$ is regular, $\lambda^\nu=\lambda$ holds, $\mu^\nu<\kappa$ holds for all $\mu<\kappa$ and $\calF$ is a normal filter on $\Poti{\lambda}{\kappa}$ with $\mathrm{Cl}_\kappa(f)\in\calF$ for every function $\map{f}{{}^\nu\lambda}{\lambda}$, then {\cite[Lemma 3.4]{MR3620068}} shows that the class of $\calF$-layered partial orders is closed under $\nu$-support products of length $\lambda$. 
  In particular, this shows that for every normal filter $\calF$ on some $\Poti{\lambda}{\kappa}$, the product of two $\calF$-layered partial orders is again $\calF$-layered. 
    In addition, it should be noted that if either $\kappa=\lambda$ is weakly compact and $\calF=\calF_{wc}$,   or if $\kappa$ is $\lambda$-supercompact and $\calF$ is a normal ultrafilter on $\Poti{\lambda}{\kappa}$, then $\mathrm{Cl}_\kappa(f)\in\calF$ holds for every $\nu<\kappa$ and every function $\map{f}{{}^\nu\lambda}{\lambda}$.

 Given cardinals $\kappa<\lambda$, the results listed above raised the question which large cardinal properties of $\kappa$ are implied by the existence of a normal filter $\calF$ on $\Poti{\lambda}{\kappa}$ for cardinals $\kappa<\lambda$ with the property that all partial orders of size at most $\lambda$ satisfying the $\kappa$-chain condition are $\calF$-layered. 
 More specifically, {\cite[Question 7.4]{MR3620068}} asked whether the existence of a normal filter $\calF$ on $\Poti{\kappa^+}{\kappa}$ with the property that every partial order of size $\kappa^+$ satisfying the $\kappa$-chain condition is $\calF$-layered implies that $\kappa$ is a measurable cardinal. 
  In {\cite[Section 6.1]{MR4082998}},  Cody answered this question in the negative by showing that for every \emph{nearly $\kappa^+$-supercompact} cardinal $\kappa$ (see \cite{MR2989393}), there exists a normal filter $\calF$ on $\Poti{\kappa^+}{\kappa}$ with the property that every partial order of size $\kappa^+$  satisfying the $\kappa$-chain condition is $\calF$-layered.
 This implication answers {\cite[Question 7.4]{MR3620068}}, because a result of Cody, Gitik, Hamkins and Schanker in \cite{MR3372604} shows that the first weakly compact cardinal $\kappa$ can be  nearly $\kappa^+$-supercompact.

 Standard arguments show that a nearly $\kappa^+$-supercompact cardinal $\kappa$ is weakly compact and Jensen's square principle $\square_\kappa$ fails. 
 By {\cite[Theorem 0.1]{MR2499432}}, the existence of  a cardinal with these two properties implies the existence of a sharp for a proper class model with a proper class of strong cardinals and a proper class of Woodin cardinals. 
 This raises the question whether a negative answer to {\cite[Question 7.4]{MR3620068}} has very high consistency strength. 
 In the remainder of this section, we will use the techniques developed earlier to prove the following theorem that shows that this question also has a negative answer:

 \begin{theorem}\label{theorem:StronglyUnfoldableLayered}
  If $\kappa$ is a strongly unfoldable cardinal and $\lambda\geq\kappa$ is a cardinal, then there exists a normal filter $\calF$ on $\Poti{\lambda}{\kappa}$ such that the following statements hold: 
  \begin{enumerate}
   \item Every partial order of cardinality at most $\lambda$ that satisfies the $\kappa$-chain condition is $\calF$-layered. 

   \item If $\nu<\kappa$ is a cardinal and $\map{f}{{}^\nu\lambda}{\lambda}$, then $\mathrm{Cl}_\kappa(f)$ is an element of $\calF$. 
  \end{enumerate}
 \end{theorem}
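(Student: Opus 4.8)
The plan is to build $\calF$ from the Magidor-style embeddings supplied by Lemma~\ref{lemma:ShrewdEmbeddings} (available since $\kappa$ is shrewd by Theorem~\ref{theorem:Equivalent}), reading off a canonical ``seed'' in $\Poti{\lambda}{\kappa}$ from each embedding. Fix a cardinal $\theta>\lambda$ that is a limit of fixed points of the $\beth$-function, together with a prescribed parameter $z\in\HH{\theta}$; Lemma~\ref{lemma:ShrewdEmbeddings} then yields cardinals $\bar\kappa<\bar\theta<\kappa$, an elementary submodel $X\prec\HH{\bar\theta}$ with $(\bar\kappa+1)\cup{}^{{<}\bar\kappa}X\subseteq X$, and an elementary embedding $\map{j}{X}{\HH{\theta}}$ with $j\restriction\bar\kappa=\id_{\bar\kappa}$, $j(\bar\kappa)=\kappa$ and $z\in\ran{j}$. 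Writing $\bar\lambda=j^{-1}(\lambda)$, I set $a_j=\ran{j}\cap\lambda=j[\bar\lambda\cap X]$ and note that $a_j\in\Poti{\lambda}{\kappa}$ with $a_j\cap\kappa=\bar\kappa$, since $\lvert a_j\rvert\leq\lvert X\rvert=\bar\kappa<\kappa$ and $j\restriction\bar\kappa=\id_{\bar\kappa}$.

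I would then define $\calF$ to consist of those $S\subseteq\Poti{\lambda}{\kappa}$ such that $a_j\in S$ holds for every embedding $j$ as above (for all sufficiently large $\theta$) satisfying $S,\lambda\in\ran{j}$, and verify that $\calF$ is a fine, normal filter. Fineness is immediate: if $\gamma\in\ran{j}$ then $\gamma\in a_j$, and $\{a:\gamma\in a\}\in\ran{j}$ forces $\gamma\in\ran{j}$. The normality argument is the familiar seed computation: given $\seq{S_\xi}{\xi<\lambda}\in\ran{j}$ with each $S_\xi\in\calF$, every $\xi\in a_j$ is of the form $j(\bar\xi)$ with $\bar\xi\in X$, whence $S_\xi=j((j^{-1}(\seq{S_\xi}{\xi<\lambda}))_{\bar\xi})\in\ran{j}$, so $a_j\in S_\xi$; thus $a_j$ witnesses membership of the diagonal intersection. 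The delicate point—and the main obstacle—is to reconcile the ``$S\in\ran{j}$'' clause with the filter axioms (upward closure and finite intersections): unlike the weakly compact case, where the seed is the fixed ordinal $\kappa$ and embeddings may be freely restricted, here enlarging the range of $j$ to capture a new parameter alters the seed $\ran{j}\cap\lambda$. I expect to resolve this by passing to the filter generated by the sets exhibited below and establishing properness directly through the seed computations (any finitely many relevant parameters can be placed in a single range), so that the substantive work is confined to the two membership computations together with the normality verification.

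For clause (ii), fix $\nu<\kappa$ and $\map{f}{{}^{\nu}\lambda}{\lambda}$, and take $j$ with $f\in\ran{j}$. Then $\nu\in\ran{j}$ forces $\nu<\bar\kappa$, and $\bar f=j^{-1}(f)\in X$. Given $\sigma\in{}^{\nu}a_j$, write $\sigma=j\circ\bar\sigma$ with $\bar\sigma\in{}^{\nu}(\bar\lambda\cap X)$; since $\nu<\bar\kappa$ and ${}^{{<}\bar\kappa}X\subseteq X$ we have $\bar\sigma\in X$, and elementarity gives $f(\sigma)=j(\bar f)(j(\bar\sigma))=j(\bar f(\bar\sigma))\in j[\bar\lambda\cap X]=a_j$. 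Hence $a_j\in\mathrm{Cl}_\kappa(f)$, so that $\mathrm{Cl}_\kappa(f)\in\calF$.

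For clause (i), let $\PPP$ be a partial order of cardinality at most $\lambda$ satisfying the $\kappa$-chain condition and let $\map{s}{\lambda}{\PPP}$ be a surjection; take $j$ with $\PPP,s\in\ran{j}$. A short computation with elementarity shows $s[a_j]=\PPP\cap\ran{j}=j[\bar{\PPP}\cap X]$, where $\bar{\PPP}=j^{-1}(\PPP)$; this suborder has size $\lvert X\rvert<\kappa$, and its regularity in $\PPP$ follows from the $\kappa$-chain condition exactly as in the weakly compact case treated in \cite{MR3620068}, using that $\bar{\PPP}$ is $\bar\kappa$-cc in $X$ and that $j$ carries a maximal antichain of $\bar{\PPP}$ lying in $X$ to a maximal antichain of $\PPP$ contained in $\ran{j}$. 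Thus $a_j\in\Set{a\in\Poti{\lambda}{\kappa}}{s[a]\in\mathrm{Reg}_\kappa(\PPP)}$, giving $\calF$-layeredness of $\PPP$. In summary, once $\calF$ is known to be a normal filter, clauses (i) and (ii) reduce to these direct seed computations; the principal difficulty lies in the filter construction itself, and in particular in arranging that the seed-membership conditions close up into a genuine normal filter.
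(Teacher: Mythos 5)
Your seed computations are essentially the ones in the paper's proof: the verification that $a_j\in\mathrm{Cl}_\kappa(f)$ when $\nu,\lambda,f\in\ran{j}$, the regular-suborder argument for $\kappa$-cc posets via the $\bar\kappa$-chain condition of $j^{-1}(\PPP)$ together with the closure of $X$ under ${<}\bar\kappa$-sequences, and the diagonal-intersection computation all appear there in the same form. The gap is precisely the step you flag as ``the principal difficulty'' and then leave unresolved: the construction of the normal filter. Your primary definition ($S\in\calF$ iff $a_j\in S$ for every admissible $j$ with $S,\lambda\in\ran{j}$) is not upward closed, as you note; but observe that it is not closed under diagonal intersections either, and for the same reason. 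Your normality computation shows that the seed of any embedding capturing the \emph{sequence} $\seq{S_\xi}{\xi<\lambda}$ lies in $\Delta_{\xi<\lambda}S_\xi$; membership of $\Delta_{\xi<\lambda}S_\xi$ in your $\calF$, however, would require this for every embedding that merely captures the single set $\Delta_{\xi<\lambda}S_\xi$, and such an embedding need not capture the sequence. So normality is not an already-settled item to be combined with a later fix of upward closure; it suffers from the identical defect.

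Your proposed repair --- ``pass to the filter generated by the sets exhibited below'' --- does not close this gap. The filter generated under finite intersections by the sets $R_s$ and $\mathrm{Cl}_\kappa(f)$ is proper by your properness argument, but there is no reason for it to be normal: a diagonal intersection of $\lambda$-many generators need not contain a finite intersection of generators. If instead you take the \emph{normal} filter generated by these sets, properness ceases to be a finite matter: you must show that arbitrary diagonal intersections of generators still contain seeds, which forces you to recover the defining parameters of each generator inside $\ran{j}$ from the generator alone, an argument you do not supply. The paper resolves all of this with a different definition of the filter: $S\in\calS_\kappa(\lambda)$ iff there \emph{exists} a witness set $z$ such that $j[X]\cap\lambda\in S$ holds for \emph{every} admissible embedding $j$ with $z\in\ran{j}$. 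Decoupling the witness $z$ from the set $S$ makes upward closure trivial (keep the same witness), finite intersections easy (pair the witnesses), fineness easy (the witness $\gamma$), and normality exactly your computation (the witness is the sequence $\vec{z}$ of witnesses); properness is the only place strong unfoldability enters, via Theorem \ref{theorem:Equivalent} and Lemma \ref{lemma:ShrewdEmbeddings}. With that definition in place, your membership computations for clauses (i) and (ii) go through essentially verbatim, with $\langle\lambda,s,\PPP\rangle$ and $\langle\nu,\lambda,f\rangle$ serving as the witnesses. Without this (or an equivalent) device, your proposal does not yet prove the theorem.
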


 Lemma \ref{lemma:ShrewdEmbeddings} motivates the following definition of a filter on $\Poti{\lambda}{\kappa}$ induced by shrewdness:

\begin{definition}
 Given an uncountable regular cardinal $\kappa$ and a cardinal $\lambda\geq\kappa$,  define $\calS_\kappa(\lambda)$ to be the collection of all subsets $A$ of $\Poti{\lambda}{\kappa}$   with the property that there exists a set $z$ such that $j[X]\cap\lambda\in A$ holds for 
 all cardinals $\bar{\kappa}<\bar{\theta}<\kappa\leq\lambda<\theta$ with $z\in\HH{\theta}$, 
 all elementary submodels $X$ of $\HH{\bar{\theta}}$ with $(\bar{\kappa}+1)\cup{}^{{<}\bar{\kappa}}X\subseteq X$ 
 and all elementary embeddings $\map{j}{X}{\HH{\theta}}$ with $j\restriction\bar{\kappa}=\id_{\bar{\kappa}}$,  $j(\bar{\kappa})=\kappa$ and $z\in\ran{j}$.  
 \end{definition}

 The next result shows that the collection $\calS_\kappa(\lambda)$ can also be 
 defined with the help of strongly unfoldable embeddings. The proof of this result refines the constructions made in the proof of Theorem \ref{theorem:Equivalent} by using ideas from  {\cite[Section 2]{MR2279655}} and \cite{MR1133077}.

 \begin{lemma}
  If  $\kappa$ is  an inaccessible cardinal and  $\lambda\geq\kappa$ is a cardinal, then the collection $\calS_\kappa(\lambda)$ consists of all subsets $A$ of $\Poti{\lambda}{\kappa}$ with the property that there exists a set $z$ such that $j[\pi(\lambda)]\in j(\pi(A))$ holds  
  whenever $\theta>\lambda$ is a regular cardinal with $A,z\in\HH{\theta}$, 
  $X$ is an elementary submodel of $\HH{\theta}$ of cardinality $\kappa$ such that  $\kappa\cup\{\kappa,\lambda,A,z\}\cup{}^{{<}\kappa}X\subseteq X$, 
  $\map{\pi}{X}{M}$ is the corresponding transitive collapse, 
  $N$ is a transitive set with $\VV_\theta\subseteq N$, and 
  $\map{j}{M}{N}$  is an elementary embedding with $\crit{j}=\kappa$, $j(\kappa)\geq\theta$ and $j\in N$. 
 \end{lemma}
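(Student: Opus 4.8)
The plan is to recognise the two collections as two presentations of one and the same normal filter on $\Poti{\lambda}{\kappa}$: the first derived from shrewd embeddings via the trace of their ranges, the second derived from strongly unfoldable embeddings via the usual seed-in-image criterion. Write $\calS'_\kappa(\lambda)$ for the collection described in the statement. The first thing I would do is rewrite both membership conditions so that their seeds become directly comparable. If $\map{j_0}{X_0}{\HH{\theta}}$ is a shrewd embedding as in the definition of $\calS_\kappa(\lambda)$, then $j_0\restriction\bar\kappa=\id_{\bar\kappa}$ and $j_0(\bar\kappa)=\kappa$ force $\ran{j_0}\cap\kappa=\bar\kappa$, and since $j_0[X_0]=\ran{j_0}$ the seed is simply $j_0[X_0]\cap\lambda=\ran{j_0}\cap\lambda$, the trace on $\lambda$ of the elementary submodel $\ran{j_0}\prec\HH{\theta}$ of size below $\kappa$. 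On the other side, given a strongly unfoldable embedding $(\theta,X,\pi,M,N,j)$ as in the statement, I set $\ell=j\circ\pi$. Since $\kappa\subseteq X$ gives $\pi\restriction\kappa=\id_\kappa$, the map $\map{\ell}{X}{N}$ is elementary with $\ell\restriction\kappa=\id_\kappa$ and $\ell(\kappa)=j(\kappa)\geq\theta$, and a short computation yields $j[\pi(\lambda)]=\ell[X\cap\lambda]=\ran{j}\cap j(\pi(\lambda))$ together with $j(\pi(A))=\ell(A)$. Hence the condition $j[\pi(\lambda)]\in j(\pi(A))$ is exactly the seed-in-image statement $\ell[X\cap\lambda]\in\ell(A)$, so both conditions assert membership of the relevant trace of an elementary-embedding range in an image of $A$.

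For the inclusion $\calS_\kappa(\lambda)\subseteq\calS'_\kappa(\lambda)$, I would fix $A\in\calS_\kappa(\lambda)$ with witness $z$ and an arbitrary strongly unfoldable embedding $(\theta,X,\pi,M,N,j)$, taken with a witness coding $z$, $A$, $\lambda$ and $\kappa$. Note that $\crit{j}=\kappa$ gives $\ran{j}\cap j(\kappa)=\kappa$ and that the transitive collapse of $\ran{j}\prec N$ is $M$; thus, as computed inside $N$ (which is legitimate since $j\in N$), the submodel $\ran{j}$ is precisely the range of a shrewd embedding for the pair $(\kappa,j(\kappa))$, with trace $\ran{j}\cap j(\pi(\lambda))=j[\pi(\lambda)]$. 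Reflecting the statement that $A$ lies in the shrewd filter with witness $z$ through $X\prec\HH{\theta}$ (here $\VV_\theta\subseteq N$ guarantees that $N$ computes the relevant $\HH{\cdot}$ correctly) gives, in $M$, that $\pi(A)$ lies in $\calS_\kappa(\pi(\lambda))$ with witness $\pi(z)$; applying the elementarity of $\map{j}{M}{N}$ transports this to the assertion, true in $N$, that $j(\pi(A))$ lies in $\calS_{j(\kappa)}(j(\pi(\lambda)))$ with witness $j(\pi(z))$. Evaluating this filter at the shrewd range $\ran{j}$ then yields $j[\pi(\lambda)]=\ran{j}\cap j(\pi(\lambda))\in j(\pi(A))$, as required.

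For the reverse inclusion $\calS'_\kappa(\lambda)\subseteq\calS_\kappa(\lambda)$, I would start from a shrewd embedding $j_0$ and run the shrewd-implies-strongly-unfoldable construction from the proof of Theorem \ref{theorem:Equivalent}: pulling $\lambda$ and $A$ back along $j_0$, producing a strongly-unfoldable-style embedding at $\bar\kappa$, and lifting the witnessed seed-membership statement to $\kappa$ by the $\Sigma_1$-Reflection Principle. In this direction I additionally have to secure the closure property $j\in N$ of the target; this is exactly where the ideas of {\cite[Section 2]{MR2279655}} and \cite{MR1133077} enter, through building the target model as a hull closed enough to contain the embedding. Matching the shrewd seed $\ran{j_0}\cap\lambda$ with the seed of the resulting strongly unfoldable embedding then transfers membership in $\calS'_\kappa(\lambda)$ back to membership in $\calS_\kappa(\lambda)$.

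The step I expect to be the main obstacle is the verification that filter membership reflects correctly through these embeddings. The definition of $\calS_\kappa(\lambda)$ quantifies over all target cardinals $\theta'>\lambda$, so that ``$A\in\calS_\kappa(\lambda)$'' is not prima facie absolute to a fixed $\HH{\theta}$; making the reflection in the inclusion $\calS_\kappa(\lambda)\subseteq\calS'_\kappa(\lambda)$ legitimate requires showing that seed membership is governed by bounded data (the seed $\ran{j_0}\cap\lambda$ being a subset of $\lambda$, and shrewd embeddings with the requisite range existing already with targets below $\theta$), so that the relevant instances are computed inside $N$. Dually, arranging the closure $j\in N$ in the reverse inclusion without disturbing the conditions $\crit{j}=\kappa$, $j(\kappa)\geq\theta$ and $\VV_\theta\subseteq N$ is the technical heart borrowed from the cited constructions.
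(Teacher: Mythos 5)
Your preliminary normalization is correct and matches what the paper does implicitly: with $\ell=j\circ\pi$ one indeed has $j[\pi(\lambda)]=\ell[X\cap\lambda]$ and $j(\pi(A))=\ell(A)$. However, the central step of your inclusion $\calS_\kappa(\lambda)\subseteq\calS'_\kappa(\lambda)$ is broken. You push the filter-membership statement into $N$ and then ``evaluate this filter at the shrewd range $\ran{j}$'', i.e.\ you treat $\map{j}{M}{N}$ itself as a shrewd-style embedding for the pair $(\kappa,j(\kappa))$ inside $N$. It is not one: the definition of $\calS_{j(\kappa)}(j(\pi(\lambda)))$, relativized to $N$, only quantifies over embeddings whose domain is an elementary submodel of $\HH{\bar{\theta}}^N$ for some cardinal $\bar{\theta}<j(\kappa)$, whereas the domain of $j$ is the transitive collapse $M$ of $X$, which is elementary in no such structure. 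Indeed, whenever the collapse $\pi$ is non-trivial (e.g.\ when $\lambda>\kappa$, or when there is any cardinal $\mu\in X$ with $\kappa<\mu<\theta$), the model $M$ believes that ordinals such as $\pi(\lambda)$ or $\pi(\mu)$ are cardinals above $\kappa$, while in $\VV$ they have cardinality $\kappa$ and the collapsing surjections lie in $\HH{\kappa^+}\subseteq\VV_\theta\subseteq N$, hence in every candidate $\HH{\bar{\theta}}^N$ with $\bar{\theta}>\kappa$; for $\bar{\theta}\leq\kappa$ even the inclusion $M\subseteq\HH{\bar{\theta}}^N$ fails. So the object at which you evaluate the filter is invisible to the filter, and since the $\calS'$-condition must be verified for \emph{all} admissible tuples $(\theta,X,\pi,M,N,j)$, including those with non-trivial collapse, the argument does not go through. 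The paper's missing idea is precisely the repair: fix in advance a regular $\nu>\lambda^\kappa$ with $z\in\HH{\nu}$, put $\VV_\nu$ \emph{into the witness set} (your witness, coding only $z,A,\lambda,\kappa$, also fails to force the cardinals $\theta$ quantified over in $\calS'_\kappa(\lambda)$ to be large enough for a suitable $\nu$ to exist in $X$ at all), and work with the cut-down map $(j\circ\pi)\restriction(\HH{\nu}\cap X)$, whose domain $\HH{\nu}\cap X$ \emph{is} an elementary submodel of $\HH{\nu}$ of size $\kappa<j(\kappa)$ and whose trace still satisfies $(j\circ\pi)[\HH{\nu}\cap X]\cap j(\pi(\lambda))=j[\pi(\lambda)]$.

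Relatedly, the obstacle you flag at the end (the unbounded quantifier over targets in the definition of $\calS_\kappa(\lambda)$) is genuine, but your proposed remedy remains a gesture; the paper avoids relativizing the filter statement altogether by arguing by contradiction: assuming $j[\pi(\lambda)]\notin j(\pi(A))$, it exhibits the single violating shrewd-style embedding $(j\circ\pi)\restriction(\HH{\nu}\cap X)$ inside $N$ and pulls this one bounded existential statement back along the elementary map $\map{j\circ\pi}{X}{N}$, producing a genuine violating embedding into $\HH{\nu}$ in $\VV$ that contradicts the witness $z$. Your other inclusion has an analogous gap: the ``technical heart'' you defer to hull constructions from the cited papers is in fact not a hull argument at all. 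The paper takes the target $N$ to be $\HH{\nu}$ itself and observes that the restriction of the given shrewd embedding to $\HH{\bar{\nu}}\cap X$ is a set of size less than $\kappa$ all of whose elements lie in $\HH{\nu}$, so it is an element of $\HH{\nu}=N$ by the regularity of $\nu$; only then do the $\Sigma_1$-reflection, the transitive collapse of the pushed-forward domain $Y$, and the composition $k=i\circ\pi^{-1}$ yield the violating $\calS'$-style embedding. Without these two mechanisms, neither inclusion is actually established.
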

 
 \begin{proof}
  First, assume that the set $z$ witnesses that some subset $A$ of $\Poti{\lambda}{\kappa}$ is an element of $\calS_\kappa(\lambda)$. 
  Fix a regular cardinal $\nu>\lambda^\kappa$ with $z\in\HH{\nu}$. 
  Now, let $\theta$ be a regular cardinal with $\VV_\nu\in\HH{\theta}$, let $X$ be an elementary submodel of $\HH{\theta}$ of cardinality $\kappa$ with $\kappa\cup\{\kappa,\lambda,\nu,A,z\}\cup{}^{{<}\kappa}X\subseteq X$, let $\map{\pi}{X}{M}$ be the corresponding transitive collapse, let $N$ be a transitive set with $\VV_\theta\subseteq N$ and let $\map{j}{M}{N}$  be an elementary embedding with $\crit{j}=\kappa$, $j(\kappa)\geq\theta$ and $j\in N$.  
  Assume, towards a contradiction, that $j[\pi(\lambda)]\notin j(\pi(A))$. 
  Since $\HH{\nu},X,j\in N$ and $(j\circ\pi)[\HH{\nu}\cap X]\cap j(\pi(\lambda))=j[\pi(\lambda)]$, we know that the cardinals $\kappa\leq\lambda<\nu$, the set $\HH{\nu}\cap X$ and the map $(j\circ \pi)\restriction(\HH{\nu}\cap X)$ witness that, in $N$, there exist 
  cardinals $\bar{\kappa}\leq\bar{\lambda}<\bar{\nu}<j(\kappa)$, 
  an elementary submodel $Y$ of $\HH{\bar{\nu}}$ of cardinality $\bar{\kappa}$ with $\bar{\kappa}\cup\{\bar{\kappa},\bar{\lambda}\}\cup{}^{{<}\bar{\kappa}}Y\subseteq Y$, and
  %
  %
  an elementary embedding $\map{k}{Y}{j(\pi(\HH{\nu}))}$ with the property that $k\restriction\bar{\kappa}=\id_{\bar{\kappa}}$, $k(\bar{\kappa})=j(\kappa)$, $k(\bar{\lambda})=j(\pi(\lambda))$, $j(\pi(z))\in\ran{k}$  and $k[Y]\cap k(\bar{\lambda})\notin j(\pi(A))$. 
  The correctness properties of $X$ and the fact that $\map{j\circ \pi}{X}{N}$ is an elementary embedding now yield  
   cardinals $\bar{\kappa}\leq\bar{\lambda}<\bar{\nu}<\kappa$, 
  an elementary submodel $Y$ of $\HH{\bar{\nu}}$ of cardinality $\bar{\kappa}$ with $\bar{\kappa}\cup\{\bar{\kappa},\bar{\lambda}\}\cup{}^{{<}\bar{\kappa}}Y\subseteq Y$,  and 
  an elementary embedding $\map{k}{Y}{\HH{\nu}}$ with $k\restriction\bar{\kappa}=\id_{\bar{\kappa}}$, $k(\bar{\kappa})=\kappa$, $k(\bar{\lambda})=\lambda$, $z\in\ran{k}$ and $k[Y]\cap\lambda\notin A$. 
  This contradicts the fact that $z$ witnesses that $A$ is an element of $\calS_\kappa(\lambda)$. 
  These computations show that the set $\langle \lambda,\VV_\nu,z\rangle$ witnesses that $A$ is contained in the   collection of subsets of $\Poti{\lambda}{\kappa}$ defined above.

  Now, let $A$ be a subset of $\Poti{\lambda}{\kappa}$ and let  $z$ be a set that witnesses that $A$ is contained in the collection of subsets of $\Poti{\lambda}{\kappa}$ defined in the statement of the lemma. 
  %
  %
  %
  %
  %
  %
  %
  %
  Fix a regular cardinal $\nu>\lambda^\kappa$ with $z\in\HH{\nu}$ and  let $\rho>\nu$ be a limit of fixed points of the $\beth$-function.  
 Assume, towards a contradiction, that there are cardinals $\bar{\kappa}<\bar{\theta}<\kappa\leq\lambda<\theta$ with $\VV_\rho\in\HH{\theta}$, 
 an elementary submodel $X$ of $\HH{\bar{\theta}}$ with $(\bar{\kappa}+1)\cup{}^{{<}\bar{\kappa}}X\subseteq X$ 
 and an elementary embedding $\map{j}{X}{\HH{\theta}}$ with $j\restriction\bar{\kappa}=\id_{\bar{\kappa}}$,  $j(\bar{\kappa})=\kappa$, $\lambda,\nu,A,z\in\ran{j}$ and $j[X]\cap\lambda\notin A$.   
 Pick $\bar{\lambda},\bar{\nu},\bar{A},\bar{z}\in X$ with $j(\bar{\lambda})=\lambda$, $j(\bar{\nu})=\nu$, $j(\bar{A})=A$ and $j(\bar{z})=z$. 
  Since elementarity ensures that $\HH{\bar{\nu}}\in\HH{\bar{\theta}}$, we know that $\HH{\bar{\nu}}\cap X$ is an elementary submodel of $\HH{\bar{\nu}}$ and $\map{j\restriction(\HH{\bar{\nu}}\cap X)}{\HH{\bar{\nu}}\cap X}{\HH{\nu}}$ is an elementary embedding. 
  Note that the regularity of $\nu$ ensures that $\bar{\nu}$ is a regular cardinal and the map $j\restriction(\HH{\bar{\nu}}\cap X)$ is an element of $\HH{\nu}$. 
 In this situation, the sets $\HH{\bar{\nu}}\cap X$ and $\HH{\nu}$ together with the map   $j\restriction(\HH{\bar{\nu}}\cap X)$ witness that there exists 
 an elementary submodel $Y$ of $\HH{\bar{\nu}}$ with $\bar{\kappa}\cup\{\bar{\kappa},\bar{\lambda},\bar{A},\bar{z}\}\cup(\HH{\bar{\nu}}\cap{}^{{<}\bar{\kappa}}Y)\subseteq Y$, 
 a transitive set $N$ with $\VV_{\bar{\nu}}\subseteq N$ and 
 an elementary embedding $\map{i}{Y}{N}$ with $i\restriction\bar{\kappa}=\id_{\bar{\kappa}}$, $i(\bar{\kappa})>\bar{\nu}$, $i[Y\cap\bar{\lambda}]\notin i(\bar{A})$ and $i\in N$. 
 As in the proof of Theorem \ref{theorem:Equivalent}, we can now use the $\Sigma_1$-Reflection Principle to show that this statement also holds in $X$ and hence the elementarity of $j$ yields 
 an elementary submodel $Y$ of $\HH{\nu}$ with $\kappa\cup\{\kappa,\lambda,A,z\}\cup(\HH{\nu}\cap{}^{{<}\kappa}Y)\subseteq Y$, 
 a transitive set $N$ with $\VV_\nu\subseteq N$ and 
 an elementary embedding $\map{i}{Y}{N}$ with $i\restriction\kappa=\id_\kappa$, $i(\kappa)>\nu$,  $i[Y\cap\lambda]\notin i(A)$ and $i\in N$. 
 The regularity of $\nu$ then implies that ${}^{{<}\kappa}Y\subseteq Y$. Let $\map{\pi}{Y}{M}$ denote the corresponding transitive collapse and set $k=i\circ\pi^{{-}1}$. 
 Since $k[\pi(\lambda)]=i[Y\cap\lambda]\notin i(\lambda)$, we can now conclude that  $\map{k}{M}{N}$ is an elementary embedding with $\crit{k}=\kappa$, $k(\kappa)>\nu$, $k[\pi(\lambda)]\notin k(\pi(A))$ and $k\in N$, contradicting our assumptions on $z$.  
 This shows that the set $\langle\VV_\rho,\lambda,\nu,A,z\rangle$ witnesses that $A$ is an element of $\calS_\kappa(\lambda)$.
  \end{proof}

 In the remainder of this section, we prove that the collection $\calS_\kappa(\lambda)$ witnesses that the conclusion of Theorem \ref{theorem:StronglyUnfoldableLayered} holds true.

 \begin{lemma}
  If $\kappa$ is strongly unfoldable and $\lambda\geq\kappa$ is a cardinal, then $\calS_\kappa(\lambda)$ is a normal filter on $\Poti{\lambda}{\kappa}$.
  \end{lemma}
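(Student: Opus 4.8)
The plan is to verify the defining properties of a normal filter directly from the definition of $\calS_\kappa(\lambda)$, exploiting the fact that a single set $z$ serves as a witness for membership and that witnesses can be amalgamated. Throughout, I would use Theorem \ref{theorem:Equivalent} to pass between strong unfoldability and shrewdness, and Lemma \ref{lemma:ShrewdEmbeddings} to guarantee that the embeddings quantified over in the definition actually exist.

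First I would check that $\calS_\kappa(\lambda)$ is a proper filter containing $\Poti{\lambda}{\kappa}$. For any embedding $\map{j}{X}{\HH{\theta}}$ as in the definition, the inaccessibility of $\kappa$ together with $\bar{\theta}<\kappa$ gives $|X|\leq|\HH{\bar{\theta}}|<\kappa$, so $j[X]\cap\lambda$ is a subset of $\lambda$ of cardinality less than $\kappa$ and hence an element of $\Poti{\lambda}{\kappa}$; thus the trivial witness $z=\emptyset$ shows $\Poti{\lambda}{\kappa}\in\calS_\kappa(\lambda)$. Since Lemma \ref{lemma:ShrewdEmbeddings} ensures that for every set $z$ and every sufficiently large regular $\theta>\lambda$ there is at least one such embedding $j$ with $z\in\ran{j}$, no witness can force $j[X]\cap\lambda$ into the empty set, so $\emptyset\notin\calS_\kappa(\lambda)$. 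Upward closure is immediate: if $z$ witnesses $A\in\calS_\kappa(\lambda)$ and $A\subseteq B\subseteq\Poti{\lambda}{\kappa}$, then the same $z$ witnesses $B\in\calS_\kappa(\lambda)$.

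For closure under finite intersections, given $A_0,A_1\in\calS_\kappa(\lambda)$ with witnesses $z_0,z_1$, I would use the pair $z=\langle z_0,z_1\rangle$ as a witness for $A_0\cap A_1$. If $\map{j}{X}{\HH{\theta}}$ is an appropriate embedding with $z\in\ran{j}$, then applying elementarity to the preimage of $z$ shows that both $z_0$ and $z_1$ lie in $\ran{j}\cap\HH{\theta}$, so the defining property of each $A_i$ yields $j[X]\cap\lambda\in A_i$, whence $j[X]\cap\lambda\in A_0\cap A_1$.

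The main step, and the one requiring the most care, is normality, i.e.\ closure under diagonal intersections. Given a sequence $\seq{A_x}{x<\lambda}$ of elements of $\calS_\kappa(\lambda)$ with corresponding witnesses $\seq{z_x}{x<\lambda}$, I would take the entire sequence $z=\seq{z_x}{x<\lambda}$ as a witness for the diagonal intersection $\Set{a\in\Poti{\lambda}{\kappa}}{\forall x\in a ~ a\in A_x}$. Fix an appropriate embedding $\map{j}{X}{\HH{\theta}}$ with $z\in\ran{j}$ and fix $x\in j[X]\cap\lambda$, say $x=j(\bar{x})$ with $\bar{x}\in X$. The crux is to locate the coordinate witness $z_x$ inside $\ran{j}$: writing $\bar{z}=j^{-1}(z)\in X$, elementarity shows that $\bar{z}$ is a sequence whose $\bar{x}$-th entry $\bar{z}_{\bar{x}}$ lies in $X$ and satisfies $j(\bar{z}_{\bar{x}})=z_x$, so $z_x\in\ran{j}\cap\HH{\theta}$. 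Since $z_x$ witnesses $A_x\in\calS_\kappa(\lambda)$ and $j$ is an embedding of exactly the form quantified over in that membership, we obtain $j[X]\cap\lambda\in A_x$. As $x\in j[X]\cap\lambda$ was arbitrary, this places $j[X]\cap\lambda$ in the diagonal intersection. I expect the only delicate point to be this retrieval of $z_x$ through the embedding, which rests on the coherence of the sequence $z$ and on the identity $j\restriction\bar{\kappa}=\id_{\bar{\kappa}}$ together with $j(\bar{\kappa})=\kappa$ ensuring that the indices below $\kappa$ are matched correctly.
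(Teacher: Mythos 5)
Your proposal follows the paper's own proof almost step for step: the same appeal to Theorem \ref{theorem:Equivalent} together with Lemma \ref{lemma:ShrewdEmbeddings} to show that no $z$ can witness $\emptyset\in\calS_\kappa(\lambda)$, and the same device of taking the entire sequence $\seq{z_x}{x<\lambda}$ of witnesses as a single witness for the diagonal intersection, recovering $z_x$ for each $x\in j[X]\cap\lambda$ by pulling the sequence back through $j$ and evaluating it at $j^{-1}(x)$. Your explicit check of closure under finite intersections via the paired witness $\langle z_0,z_1\rangle$ is correct (the paper leaves this step implicit). One small aside: the retrieval of $z_x$ rests only on the elementarity of $j$ (namely $j(\bar{z})(j(\bar{x}))=z_x$ and closure of $X$ under application of functions in $X$ to arguments in $X$); the conditions $j\restriction\bar{\kappa}=\id_{\bar{\kappa}}$ and $j(\bar{\kappa})=\kappa$ play no role in that particular step.

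There is, however, one clause you never verify: fineness, i.e.\ that for every ordinal $\gamma<\lambda$ the set $A_\gamma=\Set{a\in\Poti{\lambda}{\kappa}}{\gamma\in a}$ belongs to $\calS_\kappa(\lambda)$. This is part of the standard definition of a normal filter on $\Poti{\lambda}{\kappa}$, the paper proves it explicitly, and it is needed for the intended applications (it is what makes every element of the filter stationary, which is how $\calF$-layeredness implies $\kappa$-stationary layeredness). The missing argument is one line and entirely in the spirit of your other verifications: the ordinal $\gamma$ itself serves as the witness, since any embedding $\map{j}{X}{\HH{\theta}}$ as in the definition with $\gamma\in\ran{j}$ satisfies $\gamma\in j[X]\cap\lambda$ and hence $j[X]\cap\lambda\in A_\gamma$. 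With this addition your proof is complete and coincides with the paper's.
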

 
\begin{proof}
 First, notice that the collection $\calS_\kappa(\lambda)$ is trivially closed under supersets. 
  Moreover, by Theorem \ref{theorem:Equivalent} and Lemma \ref{lemma:ShrewdEmbeddings}, the strong unfoldability of $\kappa$ directly implies that for every set $z$, there exist cardinals  $\bar{\kappa}<\bar{\theta}<\kappa\leq\lambda<\theta$ with $z\in\HH{\theta}$,  an  elementary submodel $X$ of $\HH{\bar{\theta}}$ with $(\bar{\kappa}+1)\cup{}^{{<}\bar{\kappa}}X\subseteq X$  and an elementary embedding $\map{j}{X}{\HH{\theta}}$ with $j\restriction\bar{\kappa}=\id_{\bar{\kappa}}$,  $j(\bar{\kappa})=\kappa$ and $z\in\ran{j}$. This shows that the empty set is not contained in $\calS_\kappa(\lambda)$. 
 
 Next, assume that for some ordinal $\gamma<\lambda$, there are cardinals $\bar{\kappa}<\bar{\theta}<\kappa\leq\lambda<\theta$,  an  elementary submodel $X$ of $\HH{\bar{\theta}}$ with $(\bar{\kappa}+1)\cup{}^{{<}\bar{\kappa}}X\subseteq X$  and an elementary embedding $\map{j}{X}{\HH{\theta}}$ with $j\restriction\bar{\kappa}=\id_{\bar{\kappa}}$,  $j(\bar{\kappa})=\kappa$ and $\gamma\in\ran{j}$.  
  Then $j[X]\cap\lambda$ is an element of the set $A_\gamma=\Set{a\in\Poti{\kappa}{\lambda}}{\gamma\in a}$ and this shows that the set $\gamma$ witnesses that $A_\gamma$ is contained in $\calS_\kappa(\lambda)$.

 Finally, fix a sequence $\seq{A_\gamma}{\gamma<\lambda}$ of elements of $\calS_\kappa(\lambda)$ and a sequence $\vec{z}=\seq{z_\gamma}{\gamma<\lambda}$ of sets with the property that for all $\gamma<\lambda$, the set $z_\gamma$ witnesses that $A_\gamma$ is an element of $\calS_\kappa(\lambda)$. 
  Assume that there are cardinals $\bar{\kappa}<\bar{\theta}<\kappa\leq\lambda<\theta$,  an  elementary submodel $X$ of $\HH{\bar{\theta}}$ with $(\bar{\kappa}+1)\cup{}^{{<}\bar{\kappa}}X\subseteq X$  and an elementary embedding $\map{j}{X}{\HH{\theta}}$ with $j\restriction\bar{\kappa}=\id_{\bar{\kappa}}$,  $j(\bar{\kappa})=\kappa$ and $\vec{z}\in\ran{j}$.  
  Given $\xi\in X$ with $j(\xi)<\lambda$, we  have $z_{j(\xi)}\in\ran{j}$ and therefore $j[X]\cap\lambda\in A_{j(\xi)}$. This shows that $j[X]\cap\lambda$ is an element of the diagonal intersection $\Delta_{\gamma<\lambda}A_\gamma$. We can conclude that the set $\vec{z}$ witnesses that $\Delta_{\gamma<\lambda}A_\gamma$ is an element of $\calS_\kappa(\lambda)$. This shows that $\calS_\kappa(\lambda)$ is closed under diagonal intersections.   
\end{proof}

 \begin{lemma}
  Given an uncountable regular cardinal $\kappa$ and cardinals $\nu<\kappa\leq\lambda$, we have $\mathrm{Cl}_\kappa(f)\in\calS_\kappa(\lambda)$ for every function $\map{f}{{}^\nu\lambda}{\lambda}$. 
 \end{lemma}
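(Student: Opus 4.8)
The plan is to show that the single set $z=\langle f,\nu,\lambda\rangle$ witnesses that $\mathrm{Cl}_\kappa(f)$ is an element of $\calS_\kappa(\lambda)$. So I fix cardinals $\bar{\kappa}<\bar{\theta}<\kappa\leq\lambda<\theta$ with $z\in\HH{\theta}$, an elementary submodel $X$ of $\HH{\bar{\theta}}$ with $(\bar{\kappa}+1)\cup{}^{{<}\bar{\kappa}}X\subseteq X$, and an elementary embedding $\map{j}{X}{\HH{\theta}}$ with $j\restriction\bar{\kappa}=\id_{\bar{\kappa}}$, $j(\bar{\kappa})=\kappa$ and $z\in\ran{j}$, and I aim to prove that $f[{}^\nu(j[X]\cap\lambda)]\subseteq j[X]\cap\lambda$, i.e. that $j[X]\cap\lambda\in\mathrm{Cl}_\kappa(f)$.

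First I would extract preimages $\bar{f},\bar{\nu},\bar{\lambda}\in X$ with $j(\bar{f})=f$, $j(\bar{\nu})=\nu$ and $j(\bar{\lambda})=\lambda$, and record that elementarity makes $\bar{f}$ a function from ${}^{\bar{\nu}}\bar{\lambda}$ to $\bar{\lambda}$ in $X$. The key observation, and the crux of the whole argument, is that the domain cardinal $\nu$ actually lies below the critical point $\bar{\kappa}$: since $\nu<\kappa=j(\bar{\kappa})$ and $j(\bar{\nu})=\nu$, elementarity yields $\bar{\nu}<\bar{\kappa}$, and then $j\restriction\bar{\kappa}=\id_{\bar{\kappa}}$ forces $\nu=\bar{\nu}<\bar{\kappa}$. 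This is precisely why $\nu$ is placed into the witness $z$: it guarantees that $\nu$ lies below $\bar{\kappa}$ and hence that ${}^\nu X\subseteq X$, so that $\nu$-sequences drawn from $X$ can be pulled back inside $X$.

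With this in hand the verification is routine elementarity chasing. Given any $\map{g}{\nu}{j[X]\cap\lambda}$, each value $g(i)$ lies in $\ran{j}$ below $\lambda=j(\bar{\lambda})$, so it has a unique ordinal preimage $\xi_i\in X\cap\bar{\lambda}$; setting $\bar{g}(i)=\xi_i$ defines a function $\map{\bar{g}}{\nu}{\bar{\lambda}}$ that belongs to $X$ because ${}^\nu X\subseteq X$. Since $j(i)=i$ for all $i<\nu$ and $j(\bar{\nu})=\nu$, a direct computation gives $j(\bar{g})=g$. Applying $\bar{f}$ inside $X$ produces an ordinal $\bar{f}(\bar{g})<\bar{\lambda}$ lying in $X$, and elementarity yields $f(g)=j(\bar{f})(j(\bar{g}))=j(\bar{f}(\bar{g}))$, which is an element of $j[X]$ below $j(\bar{\lambda})=\lambda$. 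Thus $f(g)\in j[X]\cap\lambda$, completing the proof that $j[X]\cap\lambda\in\mathrm{Cl}_\kappa(f)$.

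I expect the only genuine subtlety to be the step forcing $\nu<\bar{\kappa}$; everything else is a transcription of the standard ``the image is closed under the Skolem function'' argument into the shrewd-embedding setting, together with the already-verified fact (from the bound $\bar{\lambda}<\bar{\theta}<\kappa$) that $j[X]\cap\lambda=j[X\cap\bar{\lambda}]$ genuinely lies in $\Poti{\lambda}{\kappa}$. In writing it up I would also note that one could take $z=f$ alone, since $\nu$ and $\lambda$ are definable from $f$ and $\ran{j}$ is an elementary submodel of $\HH{\theta}$, but keeping $\nu$ and $\lambda$ explicit in $z$ makes the pull-back of the domain cardinal transparent.
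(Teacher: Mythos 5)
Your proof is correct and takes essentially the same approach as the paper's: the same witness $\langle\nu,\lambda,f\rangle$, the same key step that $\nu=\bar{\nu}<\bar{\kappa}$ (hence $j(\nu)=\nu$ and ${}^{\nu}X\subseteq X$), and the same pull-back of $\nu$-sequences through $j$ followed by applying the preimage $\bar{f}$ inside $X$. No gaps.
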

 
 \begin{proof}
  Assume that $\bar{\kappa}<\bar{\theta}<\kappa\leq\lambda<\theta$ are cardinals with $f\in\HH{\theta}$, 
  $X$ is an elementary submodel of $\HH{\bar{\theta}}$ with $(\bar{\kappa}+1)\cup{}^{{<}\bar{\kappa}}X\subseteq X$ 
 and  $\map{j}{X}{\HH{\theta}}$ is an elementary embedding with $j\restriction\bar{\kappa}=\id_{\bar{\kappa}}$,  $j(\bar{\kappa})=\kappa$ and $\nu,\lambda,f\in\ran{j}$.  
  Then $\nu<\bar{\kappa}$ and $j(\nu)=\nu$. Pick $f^\prime,\bar{\lambda}\in X$ with $j(\bar{\lambda})=\lambda$ and $j(f^\prime)=f$. 
  In addition, fix $\map{s}{\nu}{j[\bar{\lambda}]}$ and let $\map{s^\prime}{\nu}{X\cap\bar{\lambda}}$ denote the unique function with $j\circ s^\prime=s$. Then the closure properties of $X$ imply that $s^\prime$ is an element of $X$ and it is easy to see that  $j(s^\prime)=s$. 
  But, now we have $f(s) = j(f^\prime(s^\prime))  \in  j[\bar{\lambda}]$. 
  This shows that $j[\bar{\lambda}]\in\mathrm{Cl}_\kappa(f)$ and hence the set $\langle\nu,\lambda,f\rangle$ witnesses that $\mathrm{Cl}_\kappa(f)$ is contained in $\calS_\kappa(\lambda)$. 
 \end{proof}

 \begin{lemma}
   Given an uncountable regular cardinal $\kappa$ and a cardinal $\lambda\geq\kappa$, every partial order $\PPP$ of cardinality at most $\lambda$ that satisfies the $\kappa$-chain condition is $\calS_\kappa(\lambda)$-layered. 
 \end{lemma}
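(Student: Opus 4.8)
The plan is to verify the claim through the strongly unfoldable reformulation of $\calS_\kappa(\lambda)$ provided by the preceding lemma. Fix a partial order $\PPP$ of cardinality at most $\lambda$ satisfying the $\kappa$-chain condition together with a surjection $\map{s}{\lambda}{\PPP}$, and set $B_s=\Set{a\in\Poti{\lambda}{\kappa}}{s[a]\in\mathrm{Reg}_\kappa(\PPP)}$. First I would take $z=\langle\PPP,s\rangle$ as a witness and, appealing to that characterization, reduce the statement $B_s\in\calS_\kappa(\lambda)$ to the following assertion: whenever $\theta>\lambda$ is regular with $B_s,z\in\HH{\theta}$, $X$ is an elementary submodel of $\HH{\theta}$ of cardinality $\kappa$ with $\kappa\cup\{\kappa,\lambda,B_s,z\}\cup{}^{{<}\kappa}X\subseteq X$, $\map{\pi}{X}{M}$ is the transitive collapse, $N$ is a transitive set with $\VV_\theta\subseteq N$, and $\map{j}{M}{N}$ is elementary with $\crit{j}=\kappa$, $j(\kappa)\geq\theta$ and $j\in N$, then $j[\pi(\lambda)]\in j(\pi(B_s))$. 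Writing $\ell=j\circ\pi$ for the resulting elementary map $\map{\ell}{X}{N}$ with $\crit{\ell}=\kappa$ and $\ell\in N$, I would unravel the definition of $B_s$ under $\pi$ and $j$: since $\pi(s)$ surjects onto $\pi(\PPP)$, the set $j(\pi(s))[j[\pi(\lambda)]]$ equals $j[\pi(\PPP)]=\ell[\PPP\cap X]$, so the displayed membership is equivalent to the statement that, in $N$, the pointwise image $\ell[\PPP\cap X]$ is a regular suborder of $\ell(\PPP)=j(\pi(\PPP))$ of cardinality less than $j(\kappa)$. The cardinality bound is immediate, since $\PPP\cap X\subseteq X$ has size at most $\kappa<j(\kappa)$.

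The crucial structural observation is that the elementarity of $X$ in $\HH{\theta}$ forces the suborder $\PPP\cap X$ to inherit the compatibility relation of $\PPP$: for $p,q\in\PPP\cap X$, a common lower bound exists in $\PPP$ if and only if one exists in $\PPP\cap X$. Transporting this through $\ell$, I would check that for $p,q\in\PPP\cap X$ the conditions $\ell(p)$ and $\ell(q)$ are compatible in $\ell(\PPP)$, as computed in $N$, exactly when $p$ and $q$ are compatible in $\PPP$. This yields incompatibility preservation for the inclusion $\ell[\PPP\cap X]\subseteq\ell(\PPP)$ at once, and it identifies the antichains of $\ell[\PPP\cap X]$ with genuine $\PPP$-antichains consisting of elements of $X$; by the $\kappa$-chain condition of $\PPP$, such antichains have size strictly below $\kappa$.

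For the maximal antichain condition I would argue as follows. Given a maximal antichain $A$ of $\ell[\PPP\cap X]$ in $N$, form its preimage $A_0=\ell^{-1}[A]\subseteq\PPP\cap X$, which by the previous paragraph is a genuine $\PPP$-antichain of cardinality less than $\kappa$. Since ${}^{{<}\kappa}X\subseteq X$ and $A_0\subseteq X$, this forces $A_0\in X$, and the elementarity of $X$ in $\HH{\theta}$ together with the maximality of $A$ inside $\ell[\PPP\cap X]$ then shows that $X$ believes $A_0$ to be a maximal antichain of $\PPP$. As $A_0$ has size below $\crit{\ell}=\kappa$, we have $\ell(A_0)=\ell[A_0]=A$, so the elementarity of $\ell$ lets $N$ conclude that $A$ is a maximal antichain of $\ell(\PPP)$, as required. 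Combining this with incompatibility preservation, $\ell[\PPP\cap X]$ is a regular suborder of $\ell(\PPP)$ in $N$; hence $j[\pi(\lambda)]\in j(\pi(B_s))$, the set $z$ witnesses $B_s\in\calS_\kappa(\lambda)$, and $\PPP$ is $\calS_\kappa(\lambda)$-layered.

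I expect the main obstacle to be the careful bookkeeping across the three models $X\prec\HH{\theta}$, $M$ and $N$, specifically the claim that a maximal antichain of the image suborder pulls back to a maximal antichain of $\PPP$ that is an \emph{element} of $X$. This is where both hypotheses on $X$ are indispensable: its elementarity guarantees that suborder antichains are honest $\PPP$-antichains and hence small, while its $({<}\kappa)$-closure is precisely what places the pulled-back antichain inside $X$, so that the embedding can transport its maximality upward.
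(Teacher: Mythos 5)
Your argument is mathematically sound where it applies, but as written it does not prove the lemma as stated: the reduction in your first step is only licensed when $\kappa$ is inaccessible. The lemma assumes merely that $\kappa$ is an uncountable regular cardinal, while the preceding characterization of $\calS_\kappa(\lambda)$ through embeddings $\map{j}{M}{N}$ with $\crit{j}=\kappa$, $j(\kappa)\geq\theta$ and $j\in N$ carries inaccessibility of $\kappa$ as a hypothesis, so you may not invoke it here. (Note that the existence of a single embedding of the kind quantified over in the definition of $\calS_\kappa(\lambda)$ forces $\kappa$ to be a regular limit cardinal, but not obviously a strong limit, so the cases your argument misses are not automatically vacuous.) The paper avoids this issue entirely by verifying the defining condition of $\calS_\kappa(\lambda)$ directly: given a surjection $\map{s}{\lambda}{\PPP}$ and an embedding $\map{j}{X}{\HH{\theta}}$ with $X\prec\HH{\bar{\theta}}$, $\bar{\theta}<\kappa$, $(\bar{\kappa}+1)\cup{}^{{<}\bar{\kappa}}X\subseteq X$, $j\restriction\bar{\kappa}=\id_{\bar{\kappa}}$, $j(\bar{\kappa})=\kappa$ and $\lambda,s,\PPP\in\ran{j}$, it pulls $\PPP$ back to $\QQQ=j^{{-}1}(\PPP)\in X$, which satisfies the $\bar{\kappa}$-chain condition by elementarity, shows that $j[\QQQ\cap X]$ is a regular suborder of $\PPP$, and then checks $s[j[X]\cap\lambda]=j[\QQQ\cap X]$, so that $\langle\lambda,s,\PPP\rangle$ witnesses membership of the relevant set in $\calS_\kappa(\lambda)$. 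Your proof would be repaired either by restricting the statement to inaccessible $\kappa$ (which suffices for the application in Theorem \ref{theorem:StronglyUnfoldableLayered}) or by running your antichain argument directly against the definition, as the paper does.

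Apart from this hypothesis mismatch, your verification that $\ell[\PPP\cap X]$ is a regular suborder of $j(\pi(\PPP))$ inside $N$ is correct and is precisely the paper's antichain argument transported one level up: pull a maximal antichain of the image suborder back along the embedding, use elementarity of $X$ to see that it is a genuine antichain of the ambient order and the chain condition to bound its size below the critical point, use closure under short sequences to place it inside the domain model (whence setwise and pointwise images agree), and push maximality forward by elementarity. The trade-off between the two routes is worth noting: yours uses the $\kappa$-chain condition of $\PPP$ itself rather than the reflected $\bar{\kappa}$-chain condition of $\QQQ$, which may feel more natural; but the direction of the characterization lemma you rely on (from the strongly-unfoldable-style description into $\calS_\kappa(\lambda)$) is exactly the direction whose proof needs the $\Sigma_1$-Reflection Principle, so your reduction silently imports that machinery, whereas the paper's direct argument requires none of it and works under the weaker hypothesis.
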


\begin{proof}
 Fix some surjection $\map{s}{\lambda}{\PPP}$ and assume that there exist cardinals $\bar{\kappa}<\bar{\theta}<\kappa\leq\lambda<\theta$ with $s\in\HH{\theta}$,  an  elementary submodel $X$ of $\HH{\bar{\theta}}$ with $(\bar{\kappa}+1)\cup{}^{{<}\bar{\kappa}}X\subseteq X$  and an elementary embedding $\map{j}{X}{\HH{\theta}}$ with $j\restriction\bar{\kappa}=\id_{\bar{\kappa}}$,  $j(\bar{\kappa})=\kappa$ and $\lambda,s,\PPP\in\ran{j}$.  
 Then there exists a partial order $\QQQ\in X$ with $j(\QQQ)=\PPP$. In this situation, elementarity directly implies that $\QQQ$ satisfies the $\bar{\kappa}$-chain condition. 
 
 \begin{claim*}
  $j[\QQQ\cap X]$ is a regular suborder of $\PPP$. 
 \end{claim*}
 
 \begin{proof}[Proof of the Claim]
  The elementarity of $j$ directly ensures that $j[\QQQ\cap X]$ is a suborder of $\PPP$ and that the corresponding inclusion map preserves incompatibility. 
  Fix a maximal antichain $\calA$ in $j[\QQQ\cap X]$ and set $\calA_0=j^{{-}1}[\calA]\subseteq \QQQ\cap X$. Then elementarity implies that $\calA_0$ is an antichain in $\QQQ$ and, by earlier remarks, it follows that $\calA_0$ has cardinality less than $\bar{\kappa}$. 
  In this situation, the closure properties of $X$ ensure that $\calA_0$ is an element of $X$ and this directly implies that $j(\calA_0)=\calA$. 
  But then $\calA_0$ is a maximal antichain in $\QQQ$, because otherwise elementarity would yield a condition $q\in(\QQQ\setminus\calA_0)\cap X$ with the property that $\calA_0\cup\{q\}$ is an antichain in $\QQQ$ and this would imply that $\calA\cup\{j(q)\}$ is an antichain in $j[\QQQ\cap X]$ that properly extends $\calA$. 
  This allows us to use the elementarity of $j$ to conclude that $\calA=j(\calA_0)$ is also a maximal antichain in $\PPP$.  
 \end{proof}
 
 Now, pick $\bar{\lambda},\bar{s}\in X$ with $j(\bar{\lambda})=\lambda$ and $j(\bar{s})=s$. 
 Then elementarity implies that $\bar{s}$ is a surjection from $\bar{\lambda}$ onto $\QQQ$. 
 Given $q\in\QQQ\cap X$, there exists $\xi\in X\cap\bar{\lambda}$ satisfying $\bar{s}(\xi)=q$ and therefore we know that $$j(q) ~ = ~ s(j(\xi)) ~ \in ~  s[j[X]\cap\lambda].$$ 
 In the other direction, if $\xi\in X\cap\bar{\lambda}$, then $$s(j(\xi)) ~ = ~ j(\bar{s}(\xi)) ~ \in ~ j[\QQQ\cap X].$$ 
 This shows that $s[j[X]\cap \lambda]=j[\QQQ\cap X]$ and we can conclude that $j[X]\cap \lambda$ is contained in the set $R_s=\Set{a\in\Poti{\kappa}{\lambda}}{s[a]\in\mathrm{Reg}_\kappa(\PPP)}$. 
 In particular, we know that the set $\langle\lambda,s,\PPP\rangle$ witnesses that $R_s$ is an element of $\calS_\kappa(\lambda)$. 
\end{proof}

The statement of Theorem \ref{theorem:StronglyUnfoldableLayered} now follows directly from the combination of the above three lemmas.


 \bibliographystyle{plain}
 \bibliography{references}

\end{document}